\newcommand{\coh}{\omega}
\newcommand{\Lag}{{\mathscr L}}
\newcommand{\ds}[1]{{\pmb{#1}}}
\newcommand{\Zr}{{\mathsf Z}}
\newcommand{\dep}{\delta}
\newcommand{\vp}{\varphi}
\newcommand{\Cob}{\mathcal{C}ob}
\newcommand{\VV}{\mathbb{V}}
\newcommand{\Pa}{\mathsf{P}}
\newcommand{\Hr}{H_r}
\newcommand{\hS}{\wh{S}}
\newcommand{\wh}{\widehat}
\newcommand{\wb}{\overline}
\newcommand{\disc}{\mathrm{disc}}
\newcommand{\E}{\mathscr{E}}
\newcommand{\lk}{\operatorname{lk}}
\newcommand{\brk}[1]{{\left\langle{#1}\right\rangle}}
\newcommand{\ve}{\varepsilon}
\newcommand{\Nr}{{\mathsf N}}
\newcommand{\eexp}{{\operatorname{e}}}
\newcommand{\slt}{{\mathfrak{sl}(2)}}
\newcommand{\Ubar}{{\wb U_q^{H}\slt}}
\newcommand{\cat}{\mathscr{C}}
\newcommand{\gcat}{\mathfrak{C}}
\newcommand{\Id}{\operatorname{Id}}
\newcommand{\bp}[1]{{\left(#1\right)}}
\newcommand{\qn}[1]{{\left\{#1\right\}}}
\newcommand{\de}{{k}}
\newcommand{\qd}{{\mathsf d}}
\newcommand{\Hom}{\operatorname{Hom}}
\newcommand{\HHom}{\operatorname{\mathbb{H}om}}
\newcommand{\Hoch}{H\!H}
\newcommand{\EEnd}{\operatorname{\mathbb{E}nd}}
\renewcommand{\AA}{\mathbb{A}}
\newcommand{\HH}{\mathbb{H}}
\newcommand{\PP}{\mathbf{P}}
\newcommand{\C}{\ensuremath{\mathbb{C}} }
\newcommand{\Z}{\ensuremath{\mathbb{Z}} }
\newcommand{\R}{\ensuremath{\mathbb{R}} }
\newcommand{\N}{\ensuremath{\mathbb{N}} }
\newcommand{\Su}{\Sigma}
\newcommand{\Cp}{{\ddot\C}}
\begin{document}
\def\E{\ifmmode{\mathbb E}\else{$\mathbb E$}\fi} 
\def\N{\ifmmode{\mathbb N}\else{$\mathbb N$}\fi} 
\def\R{\ifmmode{\mathbb R}\else{$\mathbb R$}\fi} 
\def\Q{\ifmmode{\mathbb Q}\else{$\mathbb Q$}\fi} 
\def\C{\ifmmode{\mathbb C}\else{$\mathbb C$}\fi} 
\def\H{\ifmmode{\mathbb H}\else{$\mathbb H$}\fi} 
\def\Z{\ifmmode{\mathbb Z}\else{$\mathbb Z$}\fi} 
\def\P{\ifmmode{\mathbb P}\else{$\mathbb P$}\fi} 
\def\T{\ifmmode{\mathbb T}\else{$\mathbb T$}\fi} 
\def\SS{\ifmmode{\mathbb S}\else{$\mathbb S$}\fi} 
\def\DD{\ifmmode{\mathbb D}\else{$\mathbb D$}\fi} 

\renewcommand{\a}{\alpha}
\renewcommand{\b}{\beta}
\renewcommand{\d}{\delta}
\newcommand{\D}{\Delta}
\newcommand{\e}{\varepsilon}
\newcommand{\g}{\gamma}
\newcommand{\G}{\Gamma}
\newcommand{\la}{\lambda}
\newcommand{\La}{\Lambda}
\newcommand{\n}{\nabla}
\newcommand{\var}{\varphi}
\newcommand{\s}{\sigma}
\newcommand{\Sig}{\Sigma}
\renewcommand{\t}{\tau}
\renewcommand{\th}{\theta}
\renewcommand{\O}{\Omega}
\renewcommand{\o}{\omega}
\newcommand{\z}{\zeta}

\newcommand{\ben}{\begin{enumerate}}
\newcommand{\een}{\end{enumerate}}
\newcommand{\be}{\begin{equation}}
\newcommand{\ee}{\end{equation}}
\newcommand{\bea}{\begin{eqnarray}}
\newcommand{\eea}{\end{eqnarray}}
\newcommand{\bc}{\begin{center}}
\newcommand{\ec}{\end{center}}

\newtheorem{thm}{Theorem}[section]
\newtheorem{cor}[thm]{Corollary}
\newtheorem{lem}[thm]{Lemma}
\newtheorem{prop}[thm]{Proposition}
\newtheorem{ax}{Axiom}
\newtheorem{conj}[thm]{Conjecture}

\theoremstyle{definition}
\newtheorem{defn}{Definition}[section]

\theoremstyle{remark}
\newtheorem{rem}{\rm\bfseries{Remark}}[section]
\newtheorem*{notation}{Notation}

\newtheorem{ques}{\rm\bfseries{Question}}[section]
\newtheorem{cons}[rem]{\rm\bfseries{Construction}}
\newtheorem{exm}[rem]{\rm\bfseries{Example}}


\title[Non semi-simple TQFTs]
{Non semi-simple  TQFTs from unrolled quantum $sl(2)$} 
\author[Blanchet]{Christian Blanchet}
\address{Univ Paris Diderot, Sorbonne Paris Cit\'e, IMJ-PRG, UMR 7586 CNRS, F-75013, Paris, France} 
\email{christian.blanchet@imj-prg.fr}

\author[F. Costantino]{Francesco Costantino}
\address{
Univ Toulouse III Paul Sabatier, Institut de Mathématique de Toulouse
 F-31062 TOULOUSE, France }
\email{francesco.costantino@math.univ-toulouse.fr}

\author[N. Geer]{Nathan Geer}
\address{Mathematics \& Statistics\\
  Utah State University \\
  Logan, Utah 84322, USA}
\email{nathan.geer@gmail.com}

\author[B. Patureau-Mirand]{Bertrand Patureau-Mirand}
\address{Univ. Bretagne - Sud,  UMR 6205, LMBA, F-56000 Vannes, France}
\email{bertrand.patureau@univ-ubs.fr}

\begin{abstract}
  Invariants of 3-manifolds from a non semi-simple category of modules over a
  version of quantum $\slt$ were obtained by  the last three authors
  in \cite{CGP1}. They are invariants of $3$-manifolds together with a
  cohomology class which can be interpreted as a line bundle with flat
  connection. In \cite{BCGP} we have extended those invariants to
  graded TQFTs on suitable cobordism categories.  Here we give an
  overview of constructions and results, and describe the TQFT vector
  spaces. Then we provide a new, algebraic, approach to the computation of these vector spaces.  \end{abstract}

\maketitle

\section*{Introduction}
New quantum invariants of $3$-manifolds equipped with $1$-dimensional
cohomology classes over $\C/2\Z$, or equivalently $\C^*$ flat
connections, have been constructed in \cite{CGP1} from a variant of
quantum $\slt$.  This family of invariants is indexed by integers
$r\geq 2$, $r\not\equiv 0$ mod $4$, which gives the order of the
quantum parameter.  In the case $r\equiv 0$ mod $4$, we have obtained
in \cite{BCGP2} invariants of $3$-manifolds equipped with generalised
spin structures corresponding to certain flat connections on the
oriented framed bundle.  These invariants are built from surgery
presentations and have common flavor with the famous
Witten-Reshetikhin-Turaev quantum invariants, but are indeed very
different. First, they are defined for $3$-manifolds equipped with
cohomology classes, and second they use a stronger version of quantum
$sl(2)$ which in particular avoids the semi-simplification procedure
required for producing modular categories.  To emphasize the power of
these new invariants we quote that for the smallest root of unity,
$r=2$, the multivariable Alexander polynomial and Reidemeister torsion
are recovered, which allows us to reproduce the classification of lens
spaces, see \cite{BCGP}.

The TQFT extension of these invariants has been carried out in
\cite{BCGP}. The main achievement is a functor on a category of {\em
  decorated cobordism} with values in finite dimensional graded vector
spaces.  An object in this category is a surface equipped with the
following data: a base point on each connected component, a possibly
empty set of colored points, a $1$-dimensional cohomology class over
$\C/2\Z$ and a Lagrangian subspace.  A morphism is a cobordism with: a
colored ribbon graph, a cohomology class and a signature defect which
all satisfy certain admissibility conditions.  A description of these
TQFT vector spaces split into two cases, depending if the cohomology
class is integral or not. In the non-integral case, it can be done
using colored trivalent graphs with a pattern similar to the
Witten-Reshetikhin-Turaev case. In the integral case we are able to prove finite
dimensionality in general and to provide a Verlinde formula for their
graded dimension under the further assumption that the surface
contains a point with a projective color.  A new result in this paper
is an Hochschild homology description of the TQFT vector spaces. In
the integral case, our statement is proved under the previous
assumption.

\section{Unrolled quantum $sl(2)$ and modified trace invariant}
In this section we recall the unrolled quantum $sl(2)$ at root of
unity.  In the whole paper, $r\geq 2$ is an integer which is non zero
modulo $4$, $r'=r$ if $r$ is odd and $r'=\frac{r}{2}$ else.

Let $q=\eexp^{\frac{i\pi}r}$, $q^x=\eexp^{\frac{ix\pi}r}$ for $x\in
\C$. Recall (see \cite{CGP3}) the $\C$-algebra $\Ubar$ given by
generators $E, F, K, K^{-1}, H$ and relations:
\begin{align*}\label{E:RelDCUqsl}
  KEK^{-1}&=q^2E, & KFK^{-1}&=q^{-2}F, &
  [E,F]&=\frac{K-K^{-1}}{q-q^{-1}}, & E^r&=0,\\
  HK&=KH,  & [H,E]&=2E, & [H,F]&=-2F,& F^r&=0.
\end{align*}
The algebra $\Ubar$ is a Hopf algebra where the coproduct, counit and
antipode are defined in \cite{CGP3}. A \emph{weight module} is a
finite dimensional module which splits as a direct sum of $H$-weight
spaces and is such that $K$ acts as $q^H$.

The category $\cat$ of  weight modules is $\C/2\Z$-graded (by
the weights modulo $2\Z$) that is \mbox{$\cat=\bigoplus_{\wb
  \alpha\in\C/2\Z}\cat_{\wb\alpha}$} and
$\otimes:\cat_{\wb\alpha}\times\cat_{\wb\beta}\to\cat_{\wb\alpha+\wb\beta}.$
 The category
$\cat$ is a ribbon category and  we have the usual
Reshetikhin-Turaev functor from $\cat$-colored ribbon graphs to $\cat$
(which is given by Penrose graphical calculus).

The simple modules in $\cat$ are highest weight modules with any
complex number as highest weight. The generic simple modules are those
which are projective. They are indexed by the set
\begin{equation*}
  \Cp=(\C\setminus\Z)\cup r\Z.  
\end{equation*}
For $\alpha\in\Cp$, the $r$-dimensional module
$V_\alpha\in\cat_{\wb{\alpha+r-1}}$  is the
irreducible module with highest weight $\alpha+r-1$.

The group of invertible modules is generated by the one dimensional
vector space denoted $\ve=\C_r$ with $H$-weight equal to $r$ and
degree equal to $r$ modulo $2$.  The subgroup of invertible objects
with trivial degree is generated by $\sigma=\C_{2r'}$, the one
dimensional vector space with $H$-weight equal to $2r'$ (if $r$ is
even then $\sigma=\ve$).  For each integer $j$, $0\leq j\leq r-1$ the
simple module with highest weight $j$ is $j+1$ dimensional. For $0\leq
j<r-1$ it is not projective, but has a $2r$-dimensional projective
cover $P_j$. The non simple indecomposable projective modules are the
$P_j\otimes \C_{r}^{\otimes k}$, $0\leq j<r-1$, $k\in \Z$.

The link invariant underlying our construction is the re-normalized
link invariant (\cite{GPT}) that we recall briefly.  The modified
dimension is the function defined on $\{V_\alpha\}_{\alpha\in\Cp}$ by
$$\qd(\alpha)=(-1)^{r-1}\frac{r\qn\alpha}{\qn {r\alpha}},$$
where $\qn\alpha= 2i\sin\frac{\pi \alpha}{r}$.  Let $L$ be a
$\cat$-colored oriented framed link in $S^3$ with at least one
component colored by an element of $\{V_\alpha:\alpha\in\Cp\}$.
Opening such a component of $L$ gives a 1-1-tangle $T$ whose open
strand is colored by some $\alpha\in\Cp$ (here and after we identify
$\Cp$ with the set of coloring modules $\{V_\alpha\}$).  The
Reshetikhin-Turaev functor associates an endomorphism of $V_\alpha$ to
this tangle.  As $V_\alpha$ is simple, this endomorphism is a scalar
$\brk T\in\C$.  The modified invariant is $F'(L)=\qd(\alpha)\brk T$.
\begin{thm}[\cite{GPT}]
  The assignment $L\mapsto F'(L)=\qd(\alpha)\brk T$ described above is an isotopy
  invariant of the colored framed oriented link $L$.
\end{thm}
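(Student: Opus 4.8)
The plan is to derive the statement from functoriality of the Reshetikhin--Turaev invariant $F$ together with one structural property of the pair $(\cat,\qd)$. Since $F$ is already an isotopy invariant of $\cat$-colored ribbon graphs, the scalar $\brk T$ attached to the $(1,1)$-tangle $T$ obtained by opening an admissible component of $L$ (one colored by some $V_\alpha$, $\alpha\in\Cp$) depends only on the isotopy type of $T$; so the whole content is that $\qd(\alpha)\brk T$ must not depend on which admissible component of $L$ is opened, nor on the point at which it is cut. The property I would isolate is this: for all $\alpha,\beta\in\Cp$ and every $f\in\operatorname{End}_\cat(V_\alpha\otimes V_\beta)$, writing the right partial trace over $V_\beta$ as $\operatorname{ptr}^{R}_{V_\beta}(f)=t_\beta\,\Id_{V_\alpha}$ and the left partial trace over $V_\alpha$ as $\operatorname{ptr}^{L}_{V_\alpha}(f)=t_\alpha\,\Id_{V_\beta}$ (legitimate since $V_\alpha,V_\beta$ are simple), one has
\[
\qd(\alpha)\,t_\beta \;=\; \qd(\beta)\,t_\alpha .
\]
Conceptually this says that $\qd$ is the common value on identity morphisms of an \emph{ambidextrous trace} on the class $\{V_\alpha\}_{\alpha\in\Cp}$, which lies in the ideal of projective objects of $\cat$; the case $\alpha=\beta$ is the ``ambidexterity'' of each individual $V_\alpha$.

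Granting this identity, the invariance is a short diagrammatic bookkeeping. Suppose $L$ is presented as a $(1,1)$-tangle in two ways, by cutting admissible components colored $V_\alpha$ and $V_\beta$ (possibly the same component at two distinct points, in which case $\alpha=\beta$). Cutting $L$ at both chosen points at once produces, after a planar isotopy, a tangle with boundary ordered $(V_\alpha,V_\beta)$ at the bottom and $(V_\alpha,V_\beta)$ at the top, whose $F$-image is some $f\in\operatorname{End}_\cat(V_\alpha\otimes V_\beta)$. Closing the $V_\beta$-strand back up recovers the first $(1,1)$-tangle $T$ and closing the $V_\alpha$-strand recovers the second one $T'$, so functoriality of $F$ gives $\brk T\,\Id_{V_\alpha}=\operatorname{ptr}^{R}_{V_\beta}(f)$ and $\brk{T'}\,\Id_{V_\beta}=\operatorname{ptr}^{L}_{V_\alpha}(f)$. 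The displayed identity then reads $\qd(\alpha)\brk T=\qd(\beta)\brk{T'}$: when $\alpha\ne\beta$ this is the independence of the opened component, and when $\alpha=\beta$ it forces $\brk T=\brk{T'}$, the independence of the cutting point. Hence $F'(L)$ is unambiguous, and since an isotopy of $L$ carries a cut of $L$ to a cut of the isotoped link through an isotopy of $(1,1)$-tangles, $F'$ is an isotopy invariant of $L$.

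So the real work is the displayed identity, which I would prove in two stages. First, in the generic range --- when $\alpha$, $\beta$ and $\alpha+\beta$ all avoid $\Z$ --- the category is semisimple near these weights and $V_\alpha\otimes V_\beta\cong\bigoplus_j V_{\gamma_j}$ with $\gamma_j\in\Cp$; decomposing $f$ along the associated idempotents and using their partial-trace behaviour reduces the identity to a finite numerical relation among the quantities $\qn{\cdot}$, which one verifies directly. This computation also pins down the function $\qd$ up to an overall constant: the formula $\qd(\alpha)=(-1)^{r-1}r\qn\alpha/\qn{r\alpha}$ recalled above is the essentially unique solution of the constraints imposed by the generic fusion rules. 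Second, to reach arbitrary admissible colors I would fix the underlying framed link together with a combinatorial presentation and note that $\brk T$ depends holomorphically on the colors (as is visible from the Reshetikhin--Turaev construction), while $\qd$ is meromorphic; the displayed identity, an equality of two such functions valid on the dense generic locus, therefore propagates to all admissible colorings, including the colors in $r\Z$, where $\qd$ is itself defined by the corresponding removable $0/0$ limit.

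The main obstacle is precisely this passage out of the semisimple range: where $V_\alpha\otimes V_\beta$ acquires non-simple indecomposable summands no fusion computation is available, and the equality of the two partial-trace renormalizations --- equivalently, ambidexterity of the $V_\alpha$ --- is the genuinely new input, which is the substance of \cite{GPT}. The continuity/density reduction above is the cleanest route I know; alternatively, following \cite{GPT} one verifies ambidexterity of each $V_\alpha$ together with the mutual compatibility of the numbers $\{\qd(\alpha)\}_{\alpha\in\Cp}$ by a direct if somewhat technical computation inside $\Ubar$. Everything else is formal manipulation of colored ribbon diagrams.
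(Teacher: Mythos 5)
Your proposal correctly reconstructs the mechanism of \cite{GPT}, which is the cited source for this theorem (the paper itself does not reproduce the proof). The compatibility relation
$\qd(\alpha)\,\operatorname{ptr}^{R}_{V_\beta}(f) = \qd(\beta)\,\operatorname{ptr}^{L}_{V_\alpha}(f)$
for $f\in\operatorname{End}(V_\alpha\otimes V_\beta)$ --- the \emph{ambidextrous} structure together with a compatible modified dimension --- is exactly what \cite{GPT} isolates, and your diagrammatic argument reducing isotopy invariance of $F'(L)$ to this relation (cut at two admissible points, identify the two partial traces) is the same reduction they make.

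Where you differ is in the route to the key relation. \cite{GPT} proves ambidexterity of a single $V_\alpha$ directly, essentially via an open-Hopf-link computation, then verifies compatibility of the scalars $\qd(\alpha)$ across the projective ideal. You instead propose a two-stage argument: verify the relation in the generic semisimple range by decomposing $V_\alpha\otimes V_\beta\cong\bigoplus_k V_{\alpha+\beta+k}$ into simples and comparing idempotent partial traces, then extend by density and analyticity. This route is viable in principle, and your observation that $\qd$ has removable singularities at $r\Z$ so that $\brk{T}$ (holomorphic in the colors for fixed combinatorics) and $\qd$ together give a meromorphic identity that propagates from the generic locus is sound. But the generic step is understated: one cannot read the partial trace of an idempotent $e_k$ as a ratio of quantum dimensions since $\operatorname{qdim}(V_\alpha)=0$ for $\alpha\in\Cp$; the ``finite numerical relation among the quantities $\qn{\cdot}$'' already requires the open-Hopf-link evaluation (or equivalent) that is the substance of \cite{GPT}. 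In short, the two approaches converge on the same core computation --- yours reorders the roles of generic verification and analytic continuation --- and your last paragraph, where you explicitly recognize this, is an accurate assessment of where the real work lies.
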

The modified invariant $F'$ extends to $\cat$-colored ribbon graphs
provided the given colored graph contains at least one edge with
projective color.  In the case where the selected projective color is
simple, then the above description holds.  The general case is
obtained by using the fact that any indecomposable projective module
is a submodule in the tensor product of two simple projective ones,
see \cite{CGP3}.

\section{Invariant of closed $3$-manifolds}

We will consider here (compact and oriented) closed $3$-manifolds with
decorations and admissibility conditions. A decoration in a closed
$3$-manifold $M$ is a $\cat$-colored ribbon graph $T$ together with a
cohomology class $\coh\in H^1(M\setminus T,\C/2\Z)$ such that the
coloring of $T$ is compatible with $\coh$, i.e. each oriented edge $e$
of $T$ is colored by an object in $\cat_{\coh(m_e)}$ where $m_e$ is
the oriented meridian of $e$.
\begin{defn}\label{admissible}
  A connected decorated manifold $(M,T,\coh)$ is {admissible} if
  either there exists a loop $\gamma$ such that $\coh(\gamma) \notin
  \Z/2\Z$, or the colored ribbon graph $T$ contains at least one edge
  whose color is a projective module.
\end{defn} 
\begin{defn}
  A surgery presentation of a connected decorated $3$-manifold is a
  triple $(L,T,c)$ where
  \begin{enumerate}
  \item $L$ is an oriented framed link in $S^3$.
  \item $T$ is a $\cat$-colored ribbon graph in $S^3\setminus L$,
  \item $c$ is a cohomology class in $H^1(S^3\setminus (L\cup
    T),\C/2\Z)$ which is compatible with the coloring of $T$ and
    vanishes on the preferred parallels of components of $L$.
  \end{enumerate}
  The surgery presentation is computable if either $L$ is empty and
  $(S^3,T,c)$ is admissible, or for all meridian $m_j$ of components
  of $L$, one has $c(m_j)\notin {\Z/2\Z}$.
\end{defn}
The decorated $3$-manifold represented by $(L,T,c)$ is
$(S^3(L),T,\coh)$ where $S^3(L)$ is the manifold obtained by doing
surgery on $L$ and $\coh$ is the unique extension of $c$.  The
following proposition allows us to use computable surgery
presentations in order to define invariants of admissible decorated
$3$-manifolds. A weak version of this proposition is proved in
\cite{CGP1}.  In \cite{BCGP} the admissibility condition was slightly
extended to the case of integral cohomology classes and ribbon graphs
containing at least one projective color.  It is shown that up to local
skein equivalence (relations on ribbon graphs living in a ball) we may
suppose that the cohomology class is non integral, see \cite{BCGP}.
\begin{prop}
  Any connected admissible decorated $3$-manifold is skein equivalent
  to a decorated $3$-manifold which admits a computable surgery
  presentation. 
\end{prop}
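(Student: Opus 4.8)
The plan is to reduce an arbitrary connected admissible decorated $3$-manifold $(M,T,\coh)$ to one admitting a computable surgery presentation by two successive moves: first, if the cohomology class $\coh$ is integral, replace the pair $(T,\coh)$ inside a ball by a skein-equivalent configuration which makes the class non-integral; second, once $\coh$ is non-integral, produce a surgery link $L$ whose meridians are all sent to non-integral values. The first reduction is exactly the local skein argument indicated in the excerpt (``up to local skein equivalence we may suppose the cohomology class is non integral, see \cite{BCGP}''): since by admissibility $T$ contains an edge $e$ colored by a projective module, one works in a ball meeting $e$ and uses a skein relation expressing the projective color in terms of colors in $\cat_{\wb\alpha}$ with $\wb\alpha\notin\Z/2\Z$ (the generic simple modules $V_\alpha$, $\alpha\in\C\setminus\Z$), at the cost of modifying $\coh$ by something cohomologically controlled; the point is that this does not change the represented decorated manifold up to skein equivalence, and it puts us in the case treated by \cite{CGP1}.

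Next I would take a surgery presentation $M=S^3(L_0)$ coming from any framed link $L_0\subset S^3$ (Lickorish--Wallace), isotope $T$ off $L_0$, and pull back $\coh$ to a class $c_0\in H^1(S^3\setminus(L_0\cup T),\C/2\Z)$; by construction $c_0$ vanishes on the preferred parallels of the $L_0$-components, and because $\coh$ is non-integral there is some loop $\gamma$ with $\coh(\gamma)\notin\Z/2\Z$. The triple $(L_0,T,c_0)$ is a surgery presentation, but it need not be computable: some meridians $m_j$ of $L_0$ may have $c_0(m_j)\in\Z/2\Z$. To fix this, I would use the standard Kirby/surgery-presentation flexibility: a meridian $m_j$ with integral $c_0$-value can be handled by a local modification of $L_0$ near that component (e.g. introducing a small extra $0$-framed unknot that slides the offending component, in the spirit of the handle-slide and stabilization moves which are known to preserve the decorated surgered manifold) together with a small deformation of $c_0$ supported near $m_j$, chosen so that after the move all meridians receive non-integral values while the preferred parallels still get $0$; here one exploits that $\C/2\Z$ is divisible and that non-integral values are dense, so there is enough room to perturb. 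This is the step I expect to be the main obstacle: one must check that the chosen local moves on $(L,c)$ genuinely preserve the represented decorated $3$-manifold (including the cohomology class after its unique extension over $S^3(L)$) and that the admissibility/compatibility conditions propagate — i.e. that the perturbation of $c$ near a meridian can be realized by an honest cohomology class on the complement of the new link that is still compatible with the coloring of $T$.

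Finally I would assemble the two reductions: starting from $(M,T,\coh)$ admissible, the skein move of the first paragraph (needed only in the integral case) produces a skein-equivalent $(M,T',\coh')$ with $\coh'$ non-integral, and the surgery-link argument of the second paragraph then yields a computable surgery presentation $(L,T',c')$ with $c'(m_j)\notin\Z/2\Z$ for every meridian $m_j$. Since skein equivalence is an equivalence relation preserving the class of decorated $3$-manifolds under consideration, this proves that every connected admissible decorated $3$-manifold is skein equivalent to one admitting a computable surgery presentation, as claimed.
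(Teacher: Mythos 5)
Your first reduction is exactly what the paper does: by admissibility there is a projective-colored edge of $T$, and the local skein move from \cite{BCGP} replaces it by non-integral colors, so that one may assume $\coh$ non-integral. After that the paper simply invokes the ``weak version'' proved in \cite{CGP1}. The substantive content you would need to reproduce is therefore precisely your second step, and there the sketch has a genuine gap.

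The trouble is the mechanism you propose for repairing an integral meridian. The class $c_0$ on $S^3\setminus(L_0\cup T)$ is \emph{not} a free parameter: once $L_0$ and $\coh$ are fixed, the requirement that $c_0$ vanish on the preferred parallels and extend to $\coh$ determines $c_0$ (up to the torsion ambiguity in $H^1(M,\partial;\C/2\Z)$ noted in the paper's Remark, which is irrelevant to whether a value is integral). So one cannot ``perturb $c_0$ near $m_j$''; the divisibility/density of $\C/2\Z$ buys nothing. The only way to change $c_0(m_j)$ is to change the surgery link by Kirby moves, and then read off the induced change on the homology classes of the cores: sliding $L_j$ over $L_k$ replaces $c_0(m_j)$ by $c_0(m_j)\pm c_0(m_k)$, while a blow-up (a $\pm1$-framed unknotted, unlinked component, not a $0$-framed one as you wrote) adds a component with $c_0$-value $0\in\Z/2\Z$, so by itself it never helps. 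In particular, if the non-integrality of $\coh$ is carried only by meridians of $T$ and every surgery meridian starts integral, handle slides among the $L_j$ and blow-ups alone can never produce a non-integral $c_0(m_j)$.

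What is actually needed is to import the non-integral value from a loop $\gamma$ with $\coh(\gamma)\notin\Z/2\Z$ into the surgery link. The standard device is a cancelling pair: add a component $U$ parallel to an embedded representative of $\gamma$ (any framing) together with a $0$-framed meridian $V$ of $U$; by the slam-dunk this does not change the decorated surgered manifold, and a homology computation shows the new class takes the value $\coh(\gamma)\notin\Z/2\Z$ on $m_V$. One may then slide $U$ and all offending $L_j$ over $V$ to make every meridian non-integral, finishing the argument. This cancelling-pair step is the key point your proposal does not identify, and without it the ``Kirby-move flexibility'' you invoke does not close.
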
 
Let $\Hr=\{1-r,3-r,\ldots,r-1\}$.
For $\alpha\in \C\setminus\Z$ we define the Kirby color
$\Omega_{{\alpha}}$ as the formal linear combination
\begin{equation*}
  \label{eq:Om}
  \Omega_{{\alpha}}=\sum_{k\in \Hr}\qd(\alpha+k)V_{\alpha+k}.
\end{equation*}
Its degree is $\overline \alpha\in \C/2\Z$. Although the Kirby color depends on a complex number, its contribution in the formula for the $3$-manifold
invariant depends only on its degree.

\begin{thm}[\cite{CGP1, BCGP}]\label{theorem:CGP}
  Let $(M,T,\coh)$ be a connected admissible decorated $3$-manifold.
  If $(L,T,c)$ is a computable surgery presentation of $(M,T,\coh)$
  then
  $$\Nr(M,T,\coh)=\dfrac{F'(L\cup T)}{\Delta_+^{p}\ \Delta_-^{s}}$$
  is a well defined topological invariant (i.e. depends only of the
  orientation preserving diffeomorphism class of the triple
  $(M,T,\coh)$), where $\Delta_+,\ \Delta_-$ are given in Equation
  \eqref{eq:delta} below, $(p,s)$ is the signature of the linking
  matrix of the surgery link $L$ and for each $i$ the component $L_i$
  with meridian $m_i$ is colored by a Kirby color of degree $c(m_i)$.
\end{thm}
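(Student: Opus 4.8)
The plan is to split well-definedness into two parts: the existence of a computable surgery presentation, which is exactly the preceding Proposition, and the independence of $\Nr(M,T,\coh)$ of the chosen computable presentation, which I would establish by Kirby calculus. Two preliminaries set the stage. First, along any computable presentation the number $F'(L\cup T)$ makes sense: if $L$ is empty this is the admissibility of $(S^3,T,c)$, and otherwise every meridian $m_i$ has $c(m_i)\notin\Z/2\Z$, so the Kirby color on $L_i$ is $\Omega_{\alpha_i}$ with $\alpha_i\notin\Z$, whence each summand $V_{\alpha_i+k}$ ($k\in\Hr$) lies in $\Cp$ and is projective, giving $L\cup T$ an edge of projective color. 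Second, the decorated manifold $(S^3(L),T,\coh)$ is unambiguous: in $S^3(L)$ the preferred parallels of the $L_i$ bound disks while $c$ vanishes on them, and the meridians $m_i$ persist (as the cores of the surgery solid tori), so $c$ extends uniquely to $\coh\in H^1(S^3(L)\setminus T,\C/2\Z)$.

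Next I would invoke a Kirby-type theorem for framed links equipped with a ribbon graph and a cohomology class in the complement: two computable surgery presentations of a fixed connected admissible $(M,T,\coh)$ are related by a finite sequence of (i) ambient isotopies of $L\cup T$ in $S^3$, (ii) handle slides of one component of $L$ over another, and (iii) $\pm1$-framed stabilizations, i.e.\ adding or deleting a split $\pm1$-framed unknot. One nuisance is that intermediate diagrams in such a chain need not be computable --- a meridian can pick up an integral cohomology value --- and I would bypass this exactly as in \cite{BCGP}, using local skein equivalences on ribbon graphs inside a ball to return to a non-integral class without altering either side of the formula. It then suffices to check that $F'(L\cup T)/(\Delta_+^{p}\Delta_-^{s})$ is invariant under (i)--(iii).

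Invariance under (i) and (iii) is routine. Isotopy invariance of $F'$ is the theorem of \cite{GPT} (extended to ribbon graphs with a projective edge), and isotopy leaves the linking matrix, hence $(p,s)$, unchanged. For a stabilization, the added split $\pm1$-framed unknot $U_\pm$ has zero linking number with everything, so $(p,s)$ becomes $(p+1,s)$ or $(p,s+1)$; multiplicativity of $F'$ under disjoint union together with the computation $F'(U_\pm)=\Delta_\pm$ (the content of the formulas for $\Delta_\pm$ recalled below) multiplies $F'$ by $\Delta_\pm$, so the normalized quantity is unchanged. Here one uses that the contribution of a Kirby color depends only on its degree, which allows one to put on $U_\pm$ whatever degree computability requires.

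The hard part, and the heart of the matter, is invariance under handle slides (ii). Sliding $L_i$ over $L_j$ band-sums $L_i$ with a framed pushoff of $L_j$ and changes the linking matrix by a congruence, so by Sylvester's law $(p,s)$ is preserved and it is enough to show $F'(L\cup T)$ itself is unchanged, with $L_i$ recolored as the slide dictates and $c$ adjusted accordingly. This comes down to a handle-slide compatibility for the Kirby color: an $\Omega_\alpha$-colored meridian can be slid freely across the rest of the diagram. Concretely, the renormalization by the modified dimension $\qd$ is tuned precisely so that the pertinent fusion/encircling computation in $\cat$ --- which, using the generic fusion rule $V_\mu\otimes V_\nu\cong\bigoplus_{k\in\Hr}V_{\mu+\nu+k}$ for $\mu+\nu\notin\Z$, reduces to a single strand colored by a generic $V_\beta$ threading the $\Omega_\alpha$-colored circle --- comes out as the identity of $V_\beta$, the sum over $k\in\Hr$ of the resulting scalars collapsing as required. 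I expect this step to be the main obstacle: it is the only place where the explicit shape $\qd(\alpha)=(-1)^{r-1}r\qn\alpha/\qn{r\alpha}$ is used (the ordinary quantum dimensions all vanish, so the ``total modified dimension'' of a generic block must be made to behave like a quantum trace), and it also uses that $\Hr$ is a complete set of representatives for the $r$-periodicity governing the generic blocks. The general case, with several strands of possibly different degrees --- some projective and non-simple --- passing through the Kirby-colored circle, is reduced to the single-strand case by first fusing those strands. Assembling invariance under (i)--(iii) then yields that $F'(L\cup T)/(\Delta_+^{p}\Delta_-^{s})$ depends only on the orientation-preserving diffeomorphism class of $(M,T,\coh)$.
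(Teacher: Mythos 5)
The paper does not actually prove Theorem~\ref{theorem:CGP} --- it cites \cite{CGP1,BCGP} for it --- so there is no internal proof to compare against. That said, your proposal is the expected reconstruction of the argument in those references: reduce to a refined Kirby theorem compatible with the cohomology class, handle the issue of non-computable intermediate presentations via skein equivalence as in \cite{BCGP}, check invariance under isotopy and $\pm1$-stabilization (the latter producing the $\Delta_\pm$ normalization factors with $(p,s)$ tracked by Sylvester's law), and isolate the sliding property of the Kirby color $\Omega_\alpha$ as the core lemma, established via the generic fusion rule $V_\mu\otimes V_\nu\cong\bigoplus_{k\in\Hr}V_{\mu+\nu+k}$ and the tailored form of $\qd$. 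One small point worth making explicit that you only implicitly fold into ``isotopy invariance of $F'$'': the modified invariant is defined by cutting open one projective-colored component of $L\cup T$, and its independence of that choice (the ambidexterity of $\qd$ from \cite{GPT}) is a separate input needed before Kirby moves are even discussed. Aside from that, the strategy is correct and matches the one in the cited sources.
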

The TQFT construction uses a renormalized version of $\Nr$ which is
defined for possibly disconnected admissible decorated $3$-manifolds
with extended structures.  For a connected $3$-manifold, the extended
structure is an integer, interpreted as a signature defect, which
fixes the so called framing anomaly.

The renormalised invariant $\Zr$ is multiplicative for disjoint union,
and in the connected case is defined by

\begin{equation}\label{eq:Zdefi}
\Zr(M,T,\coh,n)=\eta\lambda^{b_1(M)}\dep^n \Nr(M,T,\coh)\end{equation}
where $n$ is the extended structure and $b_1(M)$ is the
first Betti number.  The
scalars $\lambda$, $\eta$ and $\dep$ are given by 
\begin{equation}
  \label{eq:delta}
\lambda=\frac{\sqrt{r'}}{r^2}, \quad\quad\eta=\frac1{r\sqrt{r'}} \quad\text{ and }\quad  \dep=\lambda\Delta_+=(\lambda\Delta_-)^{-1}=q^{-\frac32}\eexp^{-i(s+1)\pi/4}
\end{equation}
where  $ s $  is in $ \{1,2,3\} $ with $ s \equiv r $  mod $4$.
Another expression is the following:
\begin{equation}
  \label{eq:Zrconnected}
  \Zr(M,T,\omega,n)=\eta\lambda^{m}\dep^{-\sigma+n}F'(L\cup T)
\end{equation}
where the components of $L$ are colored by Kirby colors as in Theorem \ref{theorem:CGP}, $m\in\N$ is the number of components of $L$, $\sigma\in\Z$ is the
signature of the linking matrix $\lk(L)$. 

\section{The category of decorated  cobordisms}
In this section we define the category of decorated cobordisms, which
is a $2+1$-cobordism category.  The general idea is to be able to cut
admissible decorated extended $3$-manifolds along surfaces and get
gluing formulas for the invariant $\Zr$.  Note that the definition of
a decorated cobordism contains an admissibility condition.

\begin{defn}[Objects]
  A \emph{decorated surface} is a $4$-tuple $\ds{\Su}=(\Su,\{p_i\},\coh,
  {\Lag})$ where:
\begin{itemize}
\item $\Su$ is a closed, oriented surface which is 
  an ordered disjoint union of connected surfaces each having a distinguished base point $*$; 
\item $\{p_i\}$ is a finite (possibly empty) set of homogeneous
  $\cat$-colored oriented framed points in $\Su$ distinct from the
  base points, i.e. each $p_i\in \Su$ is equipped with a sign, a
  non-zero tangent vector and a color which is an object of $
  \cat_{\wb{\alpha_i}}$ for some $\wb{\alpha_i}\in \C/2\Z$;
\item $\coh\in H^1(\Su\setminus \{p_1,\ldots, p_k\}, *;\C/2\Z)$ is a
  cohomology class such that \mbox{$\coh( m_i)=\overline{\alpha_i}$}
  where $m_i$ is an oriented meridian around $p_i$;
\item ${\Lag}$ is a Lagrangian subspace of $H_1(\Su;\R)$.
\end{itemize}
\end{defn} 

The \emph{opposite or negative} of $\ds \Su=(\Su,\{p_1,\ldots
p_k\},\coh,{\Lag})$ is defined as
\mbox{$\overline{\ds\Su}=(\overline{\Su},\{\overline{p}_1,\ldots
  \overline{p}_k\},\coh,{\Lag})$} where $\overline{\Sigma}$ is
$\Sigma$ with the opposite orientation, $\overline{p}_i$ is $p_i$ with
the same vector and color, but opposite sign.

We orient the boundaries of manifolds using the ``outward vector
first'' convention.  Here, we also use this convention for the
boundary of a graph.  Reshetikhin-Turaev ribbon functor $F$ is adapted
to this convention, so that an upward arc colored by $V$ represents
the identity of $V$.

\begin{defn}[Cobordisms]\label{D:Cobordisms}
  Let $\ds{\Su}_\pm=(\Su_\pm,\{p_i^\pm\},\coh_\pm,{\Lag}_\pm)$ be
  decorated surfaces.  A \emph{decorated cobordism} from $\ds{\Su}_-$
  to $\ds{\Su}_+$ is a $5$-tuple $\ds M=(M,T,f, \coh,n)$ where:
\begin{itemize}
\item $M$ is an oriented $3$-manifold with boundary $\partial M$;
\item $f: \overline{\Su_-}\sqcup \Su_+\to \partial M$
is a diffeomorphism that
  preserves the orientation;
 denote the image under $f$ of the base points of 
  $\overline{\Su_-}\sqcup \Su_+$ by $*$; 
\item $T$ is a $\cat$-colored ribbon graph in $M$ such that 
$\partial T=\{\wb{f(p_i^-)}\}\cup \{f(p_i^+)\}$
and the color of the edge of $T$ containing $f(p_i^\pm)$ equals the color of $p_i^\pm$; 
\item $\coh\in H^1(M\setminus T,*;\C/2\Z)$ is a cohomology class
  relative to the base points on $\partial M$, such that
  the restriction of $\coh$ to $(\partial M\setminus \partial T)\cap \Su_\pm$ is  $(f^{-1})^*(\coh_{\pm})$;
\item the coloring of $T$ is compatible with $\coh$, i.e. each oriented edge
  $e$ of $T$ is colored by an object in $\cat_{\coh(m_e)}$ where $m_e$ is the
  oriented meridian of $e$;
\item $n$ is an arbitrary integer
  called the {\em signature-defect} of $\ds M$;
\item each connected component of $M$ disjoint from 
$f(\overline{\Su_-})$ is   admissible.\footnote{Here we extend Definition \ref{admissible}  to the case $\partial M\neq \emptyset$.}
\end{itemize}
\end{defn}

 We consider decorated cobordisms from $\ds{\Su}_-$ to $\ds{\Su}_+$  up to diffeomorphism:  a \emph{diffeomorphism}  $$g:~(M,T,f,\omega,n)\to   (M',T',f',\omega',n')$$ is  an orientation preserving diffeomorphism of the underlying manifolds $M$ and $M'$, still denoted by $g$,  
  such that $g(T)=T'$, $g\circ f=f'$, $\omega=g^*(\omega')$ and 
  $n=n'$.  Remark
  that up to diffeomorphism, $\ds M=(M,T,f,\omega,n)$ only depends on $f$
  up to isotopy.

Notice that the last condition in Definition \ref{D:Cobordisms}
ensures that all connected decorated cobordisms from $\emptyset$ to 
 $\ds{\Su}_2$ are asked to be
admissible.

\begin{rem}
  Two different cohomology classes that are compatible with the same pair
  $(M,T)$ differ by an element of $H^1(M,\partial M;\C/2\Z)$.  When this group
  is zero the compatible cohomology class is unique. 
\end{rem}

Given a decorated cobordism $\ds M=(M,T,f, \coh,n)$, from $\ds{\Su}_-$
to $\ds{\Su}_+$, let $f^-=f_{|\wb{\Su_-}}:\wb{\Su_-}\to M$ and
$f^+=f_{|\Su_+}:\Su_+\to M$ be the components of $f$. If $\Lag_-$ and
$\Lag_+$ are the Lagrangians of $\ds{\Su}_-$ and $\ds{\Su}_+$,
respectively, then we define two other Lagrangians
$M_*(\Lag_-)=(f^+)_*^{-1}\bp{(f^-\circ
  \wb{\Id_{\Su_-}})_*(\Lag_-)}\subset H_1(\ds{\Su}_+;\R)$ and
$M^*(\Lag_+)=(f^-\circ
\wb{\Id_{\Su_-}})_*^{-1}\bp{(f^+)_*(\Lag_+)}\subset
H_1(\ds{\Su}_-;\R)$ where $\wb{\Id_{\Su_-}}:\Su_-\to\wb{\Su_-}$ is the
identity.  The fact that $M_*(\Lag_-)\subset H_1(\Su_+,\R)$ and
$M^*(\Lag_+)\subset H_1(\Su_-,\R)$ are indeed Lagrangians follows from
the analysis of the Lagrangian relations induced by $f_+$ and $f_-$
(see \cite[Section IV.3.4]{Tu}).

\begin{defn}\label{D:Compostion}
 For $j=1,2$,
  let $\ds M_j=(M_j,T_j,f_j, \coh_j,n_j)$ be decorated cobordism such
  that $\ds \Su=(\Su,\{p_i\},\coh_\Su,\Lag)=\partial_+ \ds
  M_1=\overline{\partial_-\ds M_2}$.  
  Let ${\Lag}_-$ and ${\Lag}_+ $  
  be the Lagrangian subspaces of the decorated surfaces
  $\wb{\partial_-\ds M_1}$ and $\partial_+ \ds M_2$, respectively.
  The composition $\ds M_2\circ \ds M_1$ is the decorated cobordism
  $(M,T,f,\coh,n)$ from $\wb{\partial_-M_1}$ to $\partial_+M_2$
  defined as follows:
\begin{itemize}
\item $M=M_1\cup_{\Su} M_2 $ is the manifold obtained by gluing
  $\partial_+ \ds M_1$ and $\wb{\partial_-\ds M_2}$ through the map
  $f_2^-\circ (f_1^+)^{-1}$,
\item $T$ is the $\cat$-colored ribbon graph $T_1\cup_{\{p_i\}} T_2$
  in $M$,
\item $f=f_1^-\sqcup f_2^+$,
\item $\coh$ is the cohomology class obtained using the Mayer-Vietoris
  map and forgetting the base points in the gluing surface,
\item $n=n_1+n_2-\mu({M_1}_*(\Lag_-),\Lag,{M_2}^*(\Lag_+))$ where $\mu$
  is the Maslov index of the Lagrangian subspaces of $H_1(\Su,\R)$.
\end{itemize}
\end{defn}

As we now explain, decorated surfaces and cobordisms form a tensor
category $\Cob$.  The objects of $\Cob$ are decorated surfaces and
morphisms are diffeomorphism classes
of decorated cobordisms.  The composition is given in Definition
\ref{D:Compostion}.  The tensor product of $\Cob$ is given by the
disjoint union: if $\ds{\Su}_j=(\Su_j,\{p_i^j\},\coh_j,{\Lag}_j)$, for
$ j=1,2$, are two decorated surfaces then define $\ds{\Su}_1\otimes
\ds \Su_2 $ as $(\Su_1\sqcup \Su_2 ,\{p_i^1\} \sqcup
\{p_i^2\},\coh_1\oplus \coh_2,{\Lag}_1\oplus {\Lag}_2)$.  Here the
ordering of the components of $\Su_1\sqcup \Su_2$ is obtained by
putting those of $\Su_1$ first then those of $\Su_2$.  In particular,
$\ds \Su_1\sqcup \ds \Su_2\neq \ds\Su_2\sqcup\ds \Su_1$ but they are
isomorphic.  Similarly, if $\ds M_j=(M_j,T_j,f_j,\coh_j,n_j), $ for $
j=1,2$, are two cobordisms then define $\ds M_1\otimes \ds M_2 $ as
$(M_1\sqcup M_2 ,T_1\sqcup T_2 ,f_1\sqcup f_2,\coh_1\oplus \coh_2,n_1+
n_2)$.  Often, for the sake of clarity we will write $\ds M_1\sqcup
\ds M_2$ instead of $\ds M_1\otimes \ds M_2$.  The composition of
cobordisms is given in Definition \ref{D:Compostion}.

\section{Graded TQFT functor}\label{sec:TQFT}
The main achievement in \cite{BCGP} is the following theorem.
\begin{thm}
  For each $r\geq 2$, $r\not\equiv 0$ mod. $4$, there exists a
  monoidal graded TQFT functor $\VV$ from the category of decorated
  cobordisms to the category of finite dimensional $\Z$-graded vector
  spaces, which extends the renormalised invariant $\Zr$.
\end{thm}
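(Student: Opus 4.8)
The plan is to obtain $\VV$ from the invariant $\Zr$ by the \emph{universal construction} (Blanchet--Habegger--Masbaum--Vogel, Turaev), in the graded form of \cite{BCGP}, and then to establish finite dimensionality and monoidality by exhibiting an explicit spanning set. For a decorated surface $\ds\Su$ let $\VV'(\ds\Su)$ be the $\C$--vector space freely generated by diffeomorphism classes of decorated cobordisms $\ds M\colon\emptyset\to\ds\Su$ (by the last item of Definition \ref{D:Cobordisms} \emph{every} component of such a cobordism is admissible). Gluing defines a bilinear pairing $\langle\,\cdot\,,\,\cdot\,\rangle\colon\VV'(\ds\Su)\times\VV'(\wb{\ds\Su})\to\C$ by $\langle\ds M_1,\ds M_2\rangle=\Zr\bp{\wb{\ds M_2}\circ\ds M_1}$, where $\wb{\ds M_2}\colon\ds\Su\to\emptyset$ is $\ds M_2$ read backwards. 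The first point to verify is that this is legitimate: $\wb{\ds M_2}\circ\ds M_1$ is a closed decorated extended $3$--manifold which is admissible in the sense of Definition \ref{admissible} --- each of its components is either a closed component of $M_2$ (admissible, being disjoint from $\emptyset$) or contains a component of $M_1$, and admissibility (a projective color, or a loop on which $\coh$ is non-integral) is preserved when one passes to a larger component --- so that $\Zr$ of \eqref{eq:Zdefi} is defined on it. Set $\VV(\ds\Su)=\VV'(\ds\Su)/\{x:\langle x,\,\cdot\,\rangle\equiv 0\}$, so the induced pairing $\VV(\ds\Su)\times\VV(\wb{\ds\Su})\to\C$ is nondegenerate; multiplicativity of $\Zr$ under disjoint union together with $\Zr(\emptyset)=1$ gives $\VV(\emptyset)\cong\C$. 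The generators carry the $\Z$--grading introduced in \cite{BCGP} (tracking, roughly, the total $\ve=\C_r$--degree of the decoration), with respect to which the pairing is graded, so it descends to $\VV(\ds\Su)=\bigoplus_{d\in\Z}\VV(\ds\Su)_d$.

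For a decorated cobordism $\ds M\colon\ds\Su_-\to\ds\Su_+$ put $\VV(\ds M)(\ds M')=\ds M\circ\ds M'$ on generators. Naturality of the pairing, $\langle\ds M\circ x,\,\cdot\,\rangle=\langle x,\wb{\ds M}\circ\,\cdot\,\rangle$, shows this descends to a graded linear map $\VV(\ds\Su_-)\to\VV(\ds\Su_+)$; functoriality $\VV(\ds M_2\circ\ds M_1)=\VV(\ds M_2)\VV(\ds M_1)$ is then immediate from associativity of the composition of Definition \ref{D:Compostion} --- whose Maslov-index correction to the signature defect is precisely what renders $\Zr$, hence $\VV$, free of framing anomaly under gluing --- and $\VV$ of a closed decorated extended manifold $(M,T,\coh,n)\colon\emptyset\to\emptyset$ is multiplication by $\Zr(M,T,\coh,n)$. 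Thus $\VV$ is a functor extending $\Zr$; everything up to here is essentially formal once $\Zr$ is available.

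The genuine work --- and the step I expect to be the main obstacle --- is finite dimensionality, since a priori $\VV'(\ds\Su)$ is enormous. Fix once and for all a handlebody $H_{\ds\Su}$ with $\partial H_{\ds\Su}=\ds\Su$ and a trivalent spine $\Gamma\subset H_{\ds\Su}$ dual to a pants decomposition of $\Su$. Any $\ds M\colon\emptyset\to\ds\Su$ is diffeomorphic to the result of surgery on a framed link $L\subset H_{\ds\Su}$ along a $\cat$--colored ribbon graph; applying the surgery formula \eqref{eq:Zrconnected} of Theorem \ref{theorem:CGP} (the components of $L$ colored by Kirby colors $\Omega_\alpha$) together with the skein relations of $\cat$ --- decomposing objects, replacing non-projective simples by their projective covers $P_j$, and using that $F'$ sees only finitely many colors once the degree is fixed --- one rewrites the class of $\ds M$ modulo the radical as a $\C$--linear combination of classes of copies of $H_{\ds\Su}$ carrying standard ribbon graphs supported on $\Gamma$ and colored within a \emph{finite} set of objects of $\cat$ compatible with $\coh$ and with the colors of the $p_i$. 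This produces a finite spanning family, so $\dim_\C\VV(\ds\Su)<\infty$ (and the $\Z$--grading is bounded). In the non-integral case this reduction runs parallel to Reshetikhin--Turaev; in the integral case it is the delicate part of \cite{BCGP} (for the accompanying Verlinde-type statement one additionally assumes a projective color on $\ds\Su$, but plain finite dimensionality holds in general).

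Finally, monoidality. A standard handlebody for $\ds\Su_1\sqcup\ds\Su_2$ is the disjoint union $H_{\ds\Su_1}\sqcup H_{\ds\Su_2}$ with spine $\Gamma_1\sqcup\Gamma_2$, so the spanning family just produced for $\VV(\ds\Su_1\sqcup\ds\Su_2)$ consists of disjoint unions of generators; hence the canonical map $\VV(\ds\Su_1)\otimes\VV(\ds\Su_2)\to\VV(\ds\Su_1\sqcup\ds\Su_2)$ is surjective. Multiplicativity of $\Zr$ under disjoint union makes this map an isometry for the two nondegenerate pairings, hence injective, so it is an isomorphism; the remaining coherence data (unit $\VV(\emptyset)\cong\C$, associativity) are formal. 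Combining the four steps gives the monoidal graded TQFT functor $\VV$ from the category of decorated cobordisms to finite dimensional $\Z$-graded vector spaces extending $\Zr$, as claimed.
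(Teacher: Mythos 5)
Your plan is the right one in broad strokes --- the universal construction on the category of decorated cobordisms, finite dimensionality via surgery presentations and skein reduction, monoidality from multiplicativity of $\Zr$ --- but the key difficulty of the theorem is exactly where your argument breaks, and it is the step the paper is careful to flag. The paper states explicitly that the naive universal construction (generators $=$ cobordisms $\emptyset\to\ds\Su$, relations from pairing with $\ds\Su\to\emptyset$) ``produces finite dimensional vector spaces, but fails to be monoidal,'' and that this is repaired by introducing the colored spheres $\hS_k$ and replacing ``cobordisms from $\emptyset$'' by ``cobordisms from $\hS_k$,'' together with the pair-of-pants morphisms $\Pa_{k,l}^{k+l}\colon\hS_k\sqcup\hS_l\to\hS_{k+l}$. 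Your proposal uses the naive construction and never mentions $\hS_k$, so it cannot be reproducing the paper's argument.

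The precise place where your monoidality proof fails is the surjectivity claim. You argue that $\VV(\ds\Su_1)\otimes\VV(\ds\Su_2)\to\VV(\ds\Su_1\sqcup\ds\Su_2)$ is onto because ``the spanning family just produced for $\VV(\ds\Su_1\sqcup\ds\Su_2)$ consists of disjoint unions of generators,'' on the grounds that $H_{\ds\Su_1\sqcup\ds\Su_2}=H_{\ds\Su_1}\sqcup H_{\ds\Su_2}$. But surgery on a framed link preserves the number of connected components of the ambient manifold, so a \emph{connected} cobordism $\ds M\colon\emptyset\to\ds\Su_1\sqcup\ds\Su_2$ can never be presented as surgery on a link in the disconnected manifold $H_{\ds\Su_1}\sqcup H_{\ds\Su_2}$. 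Such connected cobordisms are generators of $\VV'(\ds\Su_1\sqcup\ds\Su_2)$, and to express their classes as linear combinations of disjoint unions one must cut along a separating $2$-sphere. In the semisimple (Reshetikhin--Turaev) theory this is the usual connected-sum formula; in the present non-semisimple theory the cutting sphere must itself carry an admissible decoration (otherwise the two halves may fail the admissibility condition of Definition \ref{D:Cobordisms}), and this forces the sphere to carry an arc with a projective color --- precisely the $\hS_k$ objects, with $V_0$ (and $\sigma^k$) through them. Without this device the reduction to disjoint unions simply does not go through, and the ``essentially formal'' part you describe is not formal at all.

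Two further, related gaps. First, you define the $\Z$-grading only ``roughly'' by ``tracking the total $\ve=\C_r$--degree''; in the paper the grading is built in from the start: $\VV_k(\ds\Su)$ is the universal quotient of cobordisms from $\hS_k$ to $\ds\Su$, and $\VV(\ds\Su)=\bigoplus_k\VV_k(\ds\Su)$. This is not cosmetic: without it you cannot check that the tensor structure respects the grading (the pants maps $\Pa_{k,l}^{k+l}$ are exactly what implement $\VV_k\otimes\VV_l\to\VV_{k+l}$), nor can you state the symmetry constraint correctly --- for $r$ even the functor is compatible with the \emph{super}symmetry on graded vector spaces, a sign determined by the grading. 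Second, your injectivity argument (nondegeneracy of the tensor-product pairing implies injectivity of the canonical map) is fine as far as it goes, but it only tests against split cobordisms $N_1\sqcup N_2$; the actual kernel in $\VV(\ds\Su_1\sqcup\ds\Su_2)$ is cut out by pairing against \emph{all} cobordisms $\ds\Su_1\sqcup\ds\Su_2\to\emptyset$, and verifying that the two nondegenerate pairings are compatible after the quotient again needs the splitting-along-$\hS_k$ technology. So the gap is genuine: you are missing the $\hS_k$-based reformulation, which is the main new idea of this theorem.
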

The construction uses a variant of a universal construction initiated
in \cite{BHMV}, which we will now briefly discuss.  First, given a
decorated surface $\ds{\Sigma}$, the generators of $\VV(\ds{\Sigma})$
are all decorated cobordisms from $\emptyset$ to $\ds{\Sigma}$.
The relations are defined as the kernel of the pairing with decorated
cobordisms from $\ds{\Sigma}$ to $\emptyset$. This construction
produces finite dimensional vector spaces, but fails to be monoidal.
In order to produce the full monoidal functor $\VV$, we introduce
colored spheres $\widehat{S}_k$, $k\in \Z$.  Recall that $V_0$ is the
simple projective highest module of weight $r-1$ and, for $k\in \Z$,
$\sigma^k=\C_{2kr'}$ is the one dimensional module with weight $2kr'$.
Let $\hS_k$ be the sphere with colored points $U$, where
$$U =
\begin{cases}
((V_{0},+1),(\sigma^k,+1),(V_0,-1)), & \text{if }k\neq 0 \\
((V_{0},+1),(V_0,-1)), & \text{if }k=0.
\end{cases}
$$
The cohomology class and the Lagrangian in the object $\hS_k$ are
trivial.  Using the objects $\hS_k$ we now give a description of the
graded vector space $\VV(\ds{\Su})=\oplus_k \VV_k(\ds{\Su})$
associated with a decorated surface $\ds{\Su}$ equivalent to the
definition given in \cite{BHMV}.  The vector space $\VV_k(\ds{\Su})$
is generated by all decorated cobordisms from $\hS_k$ to $\ds{\Su}$.
Relations are defined as the kernel of the pairing with decorated
cobordisms from $\ds{\Sigma}$ to $\hS_k$.

The monoidality property uses natural cobordisms which we call
``pants'' $\Pa_{k,l}^{k+l}$ from $\hS_k\sqcup \hS_l$ to $\hS_{k+l}$
and similar morphisms ``upside down'' $\Pa_{k+l}^{k,l}$ \cite[Section
4]{BCGP}. The category of decorated cobordism has a symmetry extending
the standard flip of two connected components. We have shown in
\cite[Section 5]{BHMV} that the monoidal functor $\VV$ is compatible
with standard symmetry on vector spaces when $r$ is odd, and
supersymmetry on graded vector spaces when $r$ is even.

In genus zero the TQFT vector spaces can be described from the category $\cat$. Indeed the degree zero part for a sphere with one positive  point colored with a projective object is the space of invariants \cite[Section 6.1]{BCGP}:
\begin{equation}
\VV_0(S^2,(V,+1))=\Hom(\C,V).
\end{equation}
If there are several points with $\pm$ sign and at least one projective color, we reduce to this case
via duality and tensor product.  It would be convenient to have such
formula for the whole graded vector space $\VV(S^2,(+,V))$. This is
done by introducing graded morphisms.  The category ${\gcat}$ has the
same objects as in $\cat$, but morphisms are $\Z$-graded vector
spaces, graded by $k$, defined by
\begin{equation}
\HHom(V,W)\cong\oplus_k \Hom(V\otimes \sigma^k,W).
\end{equation}
The composition of $f\in \Hom(U\otimes \sigma^k,V)$ with $g\in
\Hom(V\otimes \sigma^l,W)$ is $g\circ (f\otimes \Id_{\sigma^l})\in
\Hom(U\otimes \sigma^{l+k},W)$.  We then have:
\begin{equation}
\VV(S^2,(V,+1))=\HHom(\C,V).
\end{equation}
In the next section, a general description of the TQFT vector spaces will be obtained from
elementary bricks consisting of spheres with two or three points
colored with indecomposable projective modules.

For simple projective
modules, the list of bricks is the following \cite[Theorem 5.2]{CGP3}
\begin{equation}
  \VV(S^2,(V_\alpha,-),(V_\beta,+))=\HHom(V_\alpha,V_\beta)=
  \left\{
    \begin{array}{l}\ensuremath{
      \C\text{ if } \beta-\alpha=2kr'\text{ with }k\in \Z \text{ giving degree},}\\
      0 \text{ else}.
    \end{array}
  \right.
\end{equation}
Recall that $H_r=\{-r+1,-r+3,\dots,r-1\}$ and that $\ve,\sigma^k$ are the invertible 
modules of weight $r$ and $2kr'$ respectively.
\begin{equation}
\VV_k(S^2,(V_\alpha,+),(V_\beta,+),(V_\gamma,+))=\Hom(\sigma^k,V_\alpha\otimes V_\beta\otimes V_\gamma)=\left\{\begin{array}{l}\ensuremath{\C\text{ if } \alpha+\beta+\gamma\in H_r+ 2r'k,}\\
0 \text{ else}.\end{array}\right.
\end{equation}
In the first case above we will say that the triple
$(\alpha,\beta,\gamma)$ is $r$-admissible for degree $k$.  In other words, a
triple $(\alpha,\beta,\gamma)$ is $r$-admissible (for some degree
$k\in\Z$) if and only if $\alpha+\beta+\gamma+r-1\equiv 0\text{ mod
}2\Z$; remark that such $k$ is uniquely determined if $r$ is odd and
that there are two possible values of $k$ (differing by $1$) if $r$ is
even.  So, given a trivalent graph $G$ with oriented edges, an
$r$-admissible coloring is a map $c:{\rm Edges}(G)\to \C$ such that
the colors incoming to each vertex (if an edge is outgoing we consider it
as an incoming edge with opposite color) form an $r$-admissible
triple and if $r$ is even the datum of a specification of the
degree of the triple around each vertex. The total degree of an
$r$-admissible coloring on $G$ is the sum of the degrees of the
triples at the vertices.  
 We get a basis for the TQFT vector space using $r$-admissible colorings in
generic case as follows :

\begin{thm}\label{generic_surface}
  Let $G$ be an oriented uni-trivalent planar graph and $\Sigma=\partial
  H_G$ be the boundary of a regular neighborhood of $G$.  Suppose
  $\Sigma$ is equipped with a cohomology class which is non integral
  on all meridians of edges of $G$.  Then a basis of $\VV(\Sigma)$ is
  indexed by $r$-admissible colorings of $G$, where colors are
  compatible with the evaluation on meridians and stay in the range
  $]-r,r]$ if $r$ is odd and $[0,r[$ if $r$ is even.  Moreover, the
  degree is the sum of degrees at the vertices.
\end{thm}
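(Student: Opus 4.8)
The strategy is to decompose the surface $\Sigma=\partial H_G$ along a system of spheres-with-holes dual to the structure of $G$, and use the monoidality of $\VV$ together with the genus-zero "brick" formulas recalled above. First I would choose a pants decomposition of $\Sigma$ adapted to $G$: each trivalent vertex of $G$ contributes a three-holed sphere, each univalent vertex a one-holed sphere (a disk), and each internal edge of $G$ a cylinder along which we cut. Cutting $\Sigma$ along the corresponding collection of meridian circles $\{c_e\}$ expresses $\Sigma$ as a gluing of these elementary pieces, and I would realize this gluing at the level of cobordisms so that $\VV(\Sigma)$ is computed by composing the vector spaces attached to the pieces. Because the cohomology class is non-integral on every meridian $c_e$, the decorated surface obtained by cutting carries on each new boundary circle a non-integral evaluation; this is exactly the hypothesis that makes each gluing "computable" and, crucially, forces the intermediate colorings to be projective simple modules $V_{\alpha_e}$ with $\overline{\alpha_e}=\omega(m_e)$.

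Next I would run the gluing formula (the TQFT analogue of the surgery/skein computation behind Theorem~\ref{theorem:CGP}, together with the monoidal structure from the pants $\Pa_{k,l}^{k+l}$) to obtain
\begin{equation*}
\VV(\Sigma)\cong\bigoplus_{c}\ \bigotimes_{v}\VV(S^2,\text{3 points colored by the }V_{\alpha_e}\text{ at }v),
\end{equation*}
where the sum is over all ways $c$ of coloring the cut circles by objects $V_{\alpha_e}$ with $\overline{\alpha_e}$ prescribed by $\omega$, and at univalent vertices the factor is $\VV(S^2,(V_{\alpha_e},\pm))=\HHom(\C,V_{\alpha_e})$ or its dual. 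By the two brick formulas displayed before the theorem, the vertex factor for a trivalent vertex is one-dimensional precisely when the incoming triple is $r$-admissible (and then the degree of that summand is the degree of the triple at $v$), and is zero otherwise; the univalent factor is one-dimensional with no further constraint. Summing the local degree contributions gives the total degree as the sum of degrees at the vertices, which is the last assertion. Thus nonzero summands are indexed exactly by $r$-admissible colorings of $G$ compatible with $\omega$ on meridians, and each contributes a one-dimensional space; choosing a generator in each gives the claimed basis.

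The remaining point is to pin down the normalization of the colors: a priori an $r$-admissible color $\alpha_e$ is only determined modulo $2\Z$ by $\omega(m_e)$, so I would argue that shifting $\alpha_e\mapsto\alpha_e+2r'$ (tensoring the edge module by $\sigma$) changes the corresponding generator only by an invertible scalar and a shift of the recorded degree $k$, hence does not enlarge the basis; this is why one restricts $\alpha_e$ to the fundamental domain $]-r,r]$ (for $r$ odd) or $[0,r[$ (for $r$ even). One also checks that the two possible degree specifications at a vertex when $r$ is even are genuinely independent data, matching the "degree specification" built into the notion of $r$-admissible coloring.

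\textbf{Main obstacle.} The delicate step is the gluing formula itself: I would need to verify that cutting $\Sigma$ along the non-integral meridians is compatible with the universal construction defining $\VV$ — i.e. that the pairing used to define relations factors as a tensor product of pairings on the pieces, so that no relations are lost or gained across the cuts — and that the Maslov/signature-defect corrections in Definition~\ref{D:Compostion} contribute only a global shift (absorbed into the grading) rather than extra relations. Equivalently, one must know that the elementary pieces give a "free" generation with the expected pairing, which is essentially the content of the nondegeneracy of $\VV$ on spheres with projective colors established in \cite{BCGP}; assembling these local nondegeneracy statements into the global statement for $\Sigma$ is where the real work lies.
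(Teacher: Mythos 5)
Your proposal follows essentially the same route the paper indicates (the second of the two it mentions): decompose $\Sigma = \partial H_G$ along the meridian circles of internal edges, identify the pieces with colored spheres, and reconstruct $\VV(\Sigma)$ from the genus-zero brick formulas via a gluing/splitting argument. The "main obstacle" you flag — that cutting must be compatible with the universal construction and that the pairing factors across the cuts — is precisely what the paper supplies as Theorem \ref{teo:splitting} (Splitting theorem) and Corollary \ref{colored:splitting} (Colored splitting), after Morita-reducing the algebroid of the cut curve to the basic algebra $\AA_\alpha$, which in the non-integral case is semisimple so the tensor product over it reduces to the direct sum you write down; thus you have correctly isolated the missing ingredient rather than having a gap in the logic.
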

The theorem above is an improved version of \cite[Section 6.3]{BCGP} and can be proved using surgery formulas and skein
methods.  It can also be deduced from the general splitting
theorem \ref{trace} in the next section. 
\begin{rem}
  Here for the skein description of the basis we need to add a germ of
  incoming edge at each trivalent vertex corresponding to the incoming
  $\widehat S_k$. For $r$ even, due to supersymmetry property, we also
  need to fix an ordering of trivalent vertices up to even
  permutation. We give an example in Figure \ref{coloring}. Here an
  admissible coloring is given by the complex values $\alpha_i$ in the
  fundamental band and integers $k_j$ satisfying the above $r$-admissibility conditions.
\end{rem}
\begin{figure}
\includegraphics[height=5cm]{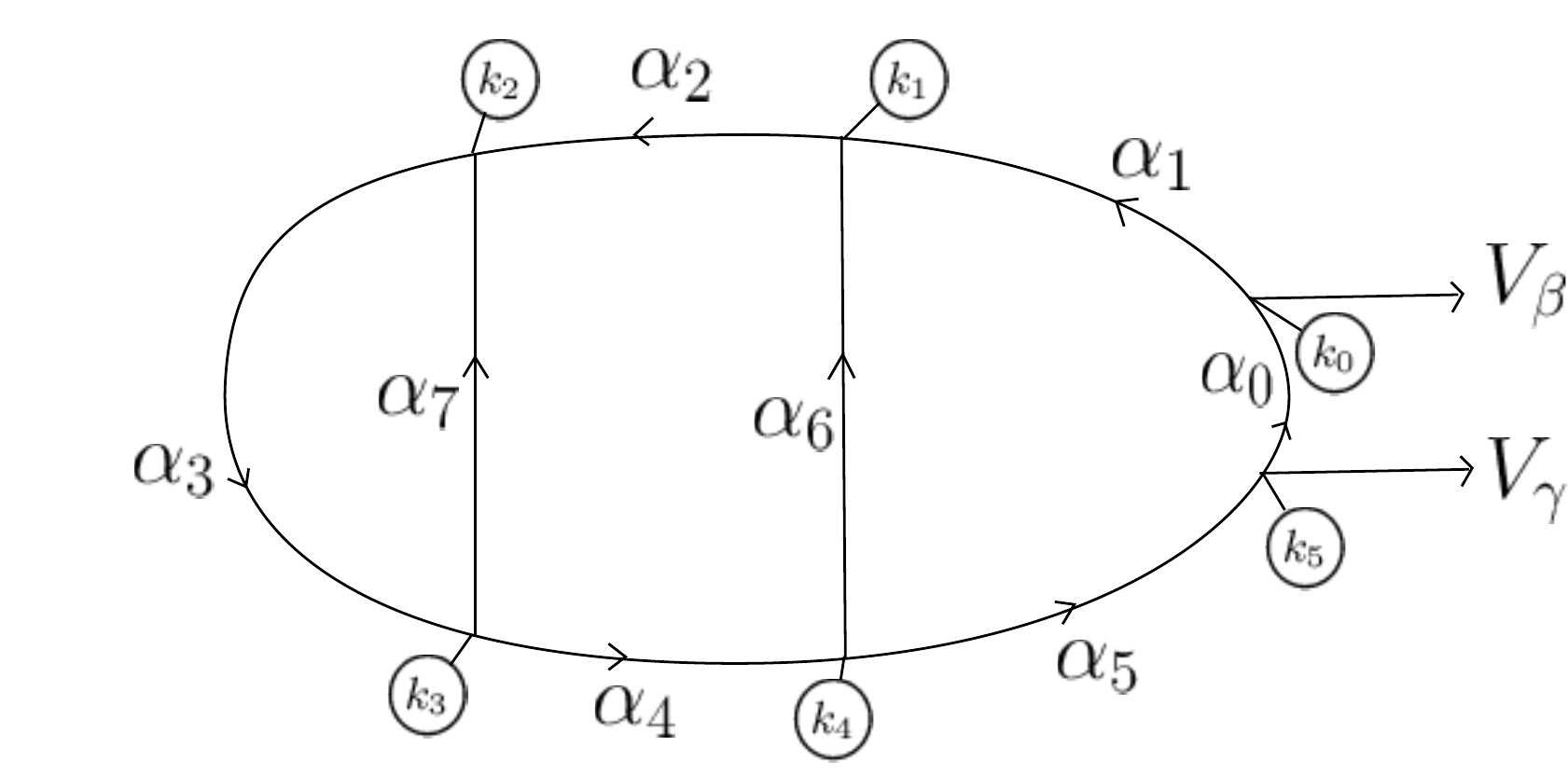} 
\caption{Colored graph in generic case}
\label{coloring}
\end{figure}
\begin{cor}
  The dimension for the TQFT vector space of a genus $g$ surface with
  non integral cohomology class is $r^{3g-3}$ if $r$ is odd, and
  $\frac{r^{3g-3}}{2^{g-1}}$ if $r$ is even.
\end{cor}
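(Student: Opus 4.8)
The plan is to deduce the Corollary directly from Theorem \ref{generic_surface} by exhibiting, for a genus $g$ surface $\Sigma$, a particularly convenient uni-trivalent planar graph $G$ whose regular neighborhood boundary is $\Sigma$, and then counting $r$-admissible colorings of $G$ in the fundamental band. First I would recall that a closed orientable surface of genus $g$ bounds a handlebody $H_G$ where $G$ is a trivalent spine; a standard choice is the ``theta-like'' graph with $2(g-1)$ trivalent vertices and $3(g-1)$ edges when $g\geq 2$ (e.g.\ a linear chain of $g-1$ theta-graphs, or the ``caterpillar''/pair-of-pants decomposition dual to a genus $g$ handlebody), so that $\Sigma=\partial H_G$ has genus $g$. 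One must also fix a non-integral cohomology class on $\Sigma$; since $H^1(\Sigma;\C/2\Z)$ is large and generic classes are non-integral on every meridian, such a class exists, and by Theorem \ref{generic_surface} the basis of $\VV(\Sigma)$ is indexed by $r$-admissible colorings of $G$ compatible with that class and lying in the fundamental band ($]-r,r]$ if $r$ odd, $[0,r[$ if $r$ even).

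The counting then goes as follows. Fix the value $\bar c(m_e)\in\C/2\Z$ that the cohomology class assigns to the meridian of each edge $e$; an admissible coloring must assign to $e$ a complex number $\alpha_e$ in the fundamental band with $\bar\alpha_e=\bar c(m_e)$, and since the fundamental band is a fundamental domain for $\C\to\C/2\Z$ this value is \emph{uniquely} determined by the cohomology class on the edges that are not being ``freely chosen''. The real content is the vertex constraint: at each of the $2(g-1)$ vertices the incoming triple $(\alpha,\beta,\gamma)$ must satisfy $\alpha+\beta+\gamma+r-1\equiv 0\bmod 2\Z$, and one must count how many of the colorings satisfying all meridian constraints also satisfy all vertex parity constraints. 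I would argue that on a spanning-tree's worth of edges the colors are forced (no freedom), while on the $3(g-1)-(2(g-1)-1)=g$ ``extra'' edges — wait, more carefully: a connected trivalent graph with $V=2(g-1)$ vertices has $E=3(g-1)$ edges and first Betti number $E-V+1=g$; the cohomology class on $\Sigma$ restricts to a class on $H_G$ determined by $g$ parameters, and the vertex parity conditions cut down the count. The clean way: for $r$ odd the degree $k$ at each vertex is uniquely determined, so once the colors are fixed the coloring is determined, and the number of valid color-tuples compatible with a generic class works out to $r^{3g-3}$; for $r$ even each vertex carries an extra binary choice of degree, giving a naive factor $2^{2(g-1)}$, but the total degree and the global parity relations among the vertices (the $g$ independent cycles impose $g$ constraints, or equivalently only $g-1$ of the binary choices are free once normalization is fixed) reduce this to $2^{g-1}$, hence $r^{3g-3}/2^{g-1}$.

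The main obstacle I anticipate is pinning down exactly the parametrization in the $r$-even case: one must show carefully that the $2(g-1)$ binary degree-choices at the vertices are \emph{not} all independent, but are constrained by the $g$ loops of $G$ (the compatibility of the $\sigma$-twists around each cycle), leaving precisely $2(g-1)-(g-1)=g-1$ free binary parameters — no, it leaves $2(g-1)$ minus $g$ constraints plus $1$ for an overall shift, and the bookkeeping must be reconciled with the factor in the genus-$g$ handlebody decomposition into $2(g-1)$ pairs of pants glued along $3(g-1)$ curves. Concretely I would verify the formula first in the base cases $g=1$ (where $3g-3=0$ and $r^0=1$, matching the known rank $r$ or $r/2$ coming from a single circle — here one should note the Corollary as stated gives $r^{0}=1$ for $g=1$, so I would double-check the intended normalization, perhaps the $g=1$ genus surface carries an extra free parameter not counted by a trivalent graph since its spine is a single loop with no vertex) and $g=2$ (theta graph, $2$ vertices, $3$ edges, giving $r^{3}$ resp.\ $r^{3}/2$), and then argue inductively by adding one theta-block at a time, each block contributing a factor $r^{3}$ (odd) or $r^{3}/2$ (even) to the dimension, which multiplies correctly since $\VV$ is monoidal and the gluing of a handle corresponds to composing with a fixed cobordism. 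The induction step is where the $2^{g-1}$ versus $2^{2(g-1)}$ discrepancy must be resolved: each new block adds one vertex-pair but the gluing circle kills one binary degree of freedom, so the net factor per genus step is $r^{3}/2$ in the even case, consistent with the stated $r^{3g-3}/2^{g-1}$.
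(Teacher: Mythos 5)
Your strategy — deduce the count from Theorem~\ref{generic_surface} by picking a trivalent spine $G$ with $2(g-1)$ vertices and $3(g-1)$ edges and counting $r$-admissible colorings in the fundamental band — is the right one, and your remark that the stated formula should be read for $g\ge 2$ (the torus has no trivalent spine and its vector space has dimension $r$, resp.\ $r/2$, not $1$) is a fair caveat. But the actual counting in your proposal goes wrong in two places, and the confusion about where the factor $2^{-(g-1)}$ comes from is a genuine gap, not a bookkeeping issue to be ``reconciled.''

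First, the fundamental band $]-r,r]$ (resp.\ $[0,r[$) is \emph{not} a fundamental domain for $\C\to\C/2\Z$; the latter is a strip of width $2$, while the band has width $2r$ (resp.\ $r$). Fixing the value of $\omega$ on the meridian of an edge determines the color $\alpha_e$ only modulo $2\Z$, so there are exactly $r$ choices of $\alpha_e$ in $]-r,r]$ when $r$ is odd, and exactly $r/2$ choices in $[0,r[$ when $r$ is even. Every edge carries this many free choices; there is no dichotomy between ``forced'' spanning-tree edges and ``freely chosen'' extra edges. Second, the vertex parity condition $\alpha+\beta+\gamma+r-1\equiv 0\bmod 2\Z$ is \emph{automatic}: since $\omega$ is a cocycle it sums to zero around each pair of pants, and $V_\alpha$ has degree $\overline{\alpha+r-1}$, so compatibility with $\omega$ on the three meridians already forces the parity identity. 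There is therefore no reduction of the count coming from vertex constraints, and the $g$ cycles of $G$ impose no independent relations on the degree choices. For $r$ even, all $2^{2(g-1)}$ degree assignments (two per vertex) are realized. The correct tally is thus $(r/2)^{3g-3}\cdot 2^{2(g-1)}=r^{3g-3}/2^{g-1}$: the denominator $2^{g-1}$ comes entirely from the shorter band $[0,r[$ giving $r/2$ rather than $r$ colors per edge, partially compensated by the $2^{2g-2}$ free vertex degrees — not from a claimed cancellation of degree choices against cycle constraints. (With your accounting, $r^{3g-3}$ colors times a ``reduced'' factor $2^{g-1}$ of degree choices would give $r^{3g-3}\cdot 2^{g-1}$, the wrong answer, so the arithmetic would not close even if the constraint argument were right.) For $r$ odd the analogous count is immediate: $r$ choices per edge, one forced degree per vertex, giving $r^{3(g-1)}=r^{3g-3}$.
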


\subsection*{Verlinde formula and the graded dimensions of $\VV(\Su)$} 
\begin{defn}\label{admissible_surface}
A decorated surface \mbox{$\ds\Su=(\Su,\{p_i\},\coh,\Lag)$} is called
admissible if either there exists a loop $\gamma$ such that
$\coh(\gamma) \notin {\Z/2\Z}$, or at least one of the $p_i$ has
projective color. 
\end{defn}

 In this section, we give the Verlinde formula
giving the graded dimension of the TQFT vector spaces for admissible
surfaces.

The  proof is given in \cite[Theorem 5.9]{BCGP} and uses the pivotal structure which does not fully
exists on the decorated cobordism category because the standard
coevaluation cobordism is admissible only for { admissible surfaces}.
 We do not know the Verlinde formula for non admissible surfaces.

Let $\ds\Su=(\Su,\{p_i\},\coh,\Lag)$ be a connected admissible
decorated surface with base point $*$.  For each ${\wb\beta}\in
\C/2\Z$ let $\Id_\Su^\vp:\Su\to\Su$ be the product $[0,1]\times
(\Su,\{p_i\})$ 
with cohomology class $\vp$ which restricts to $\coh$
on each boundary component and whose evaluation on the relative cycle
$[0,1]\times *$ is equal to $\wb\beta$. The mapping torus of
$\Id_\Su^\vp$, obtained by gluing the two ends is denoted by
$S^1_{\wb\beta}\times \ds\Su$. 
\begin{thm}[Verlinde formula for graded dimensions] \label{P:verlinde}
  Let $\ds\Su=(\Su,\{p_1,\ldots p_n\},\coh,{\Lag})$ be a connected admissible
  surface of genus $g$.
  Define the graded (super)-dimension of $\VV(\ds\Su)$ by
  $$\dim_t(\VV(\ds\Su))=\left\{
    \begin{array}{ll}
      \sum_{{\de}\in \Z} \dim(\VV_{{\de}}(\Su))t^{\de} &\text{ if $r$ odd},\\
      \sum_{{\de}\in \Z} (-1)^{\de}\dim(\VV_{{\de}}(\Su))t^{\de} &\text{ if $r$ even.}
    \end{array}\right.$$
  $$\text{Then}\qquad\dim_{q^{2r'\beta}}(\VV(\Su))=\Zr( S^1_{\wb\beta}\times\ds\Su).$$
 If $p$ is empty, then 
  $$\Zr(S^1_{\wb\beta}\times \ds\Su)=\frac1r(r')^{g} \sum_{k\in
    \Hr}\bp{\frac{\qn{r\beta}}{\qn{\beta+k}}}^{2g-2}$$
    where $\{\gamma\}=q^\gamma-q^{-\gamma}$.\\
  If $p_i$ is colored by $V_{c_i}$ for $c_1,\ldots,c_n\in\Cp$ then 
  \begin{displaymath}
    \Zr( S^1_{\wb\beta}\times \ds\Su)=\frac{(-1)^{n(r-1)}}r(r')^{g}q^{c\beta}\sum_{k\in \Hr}q^{ck}  
    \bp{\frac{\qn{r\beta}}{\qn{\beta+k}}}^{2g-2+n}
  \end{displaymath}
  where $c=\sum_ic_i$.        
\end{thm}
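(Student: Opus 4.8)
The plan is to compute $\Zr(S^1_{\wb\beta}\times\ds\Su)$ directly via the surgery formula \eqref{eq:Zrconnected}, and then match the answer with the graded dimension by appealing to Theorem \ref{P:verlinde}'s own first identity $\dim_{q^{2r'\beta}}(\VV(\Su))=\Zr(S^1_{\wb\beta}\times\ds\Su)$ (already established in \cite[Theorem 5.9]{BCGP}). So the substance is a closed-form evaluation of $\Zr$ on the mapping torus decorated by a cohomology class with holonomy $\wb\beta$ along the $S^1$-direction. First I would fix a standard surgery presentation of $S^1\times\Su$: present $\Su_g$ as the boundary of a handlebody, so that $S^1\times\Su_g$ is obtained by Dehn surgery on a link in $S^1\times S^3$ — more efficiently, use the well-known presentation of $S^1\times\Su_g$ via $2g$ surgery curves (the $S^1$ times the $a_i,b_i$ curves) plus handle-slides, or alternatively realize it by gluing along $\Su_g$ the standard cobordism computing the "$S^1\times$" operation. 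The cleanest route is to recognize $S^1_{\wb\beta}\times\ds\Su$ as the trace (closure) of the identity cobordism $\Id_\Su^\vp$, so that $\Zr$ of it equals the graded trace of the identity on $\VV(\Su)$ twisted by the $\sigma$-action, which is exactly $\dim_t$ evaluated at $t=q^{2r'\beta}$; this is the conceptual content, and the two displayed formulas are then just explicit evaluations of that trace.

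For the explicit evaluation I would proceed as follows. In the empty-graph case, take the surgery link for $S^1\times\Su_g$ consisting of the $2g$ curves $S^1\times a_i$, $S^1\times b_i$ inside $S^1\times S^3$ together with the unknot $S^1\times\mathrm{pt}$ giving the $S^1\times S^3$ factor itself from $S^3$ by $0$-surgery; each component gets a Kirby color $\Omega$ whose degree is prescribed by the cohomology class. The key computation is then a standard "fusion/bubble" reduction: colored by $V_{\beta+k}$, a meridian–longitude Hopf-link pairing contributes $S$-matrix-type coefficients, and summing against the Kirby color collapses each handle to a factor of $\qd(\beta+k)$ to an appropriate power. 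Concretely, each genus adds a factor built from $\sum_k \qd(\beta+k)(\text{something})$, and after using the identities $\qd(\alpha)=(-1)^{r-1}\tfrac{r\qn\alpha}{\qn{r\alpha}}$, $\qn\gamma = 2i\sin\tfrac{\pi\gamma}{r}$, together with $\{\gamma\}=q^\gamma-q^{-\gamma}$ (note $\qn\gamma$ and $\{\gamma\}$ agree up to normalization), the Euler-characteristic exponent $2g-2$ emerges: $g$ factors of $\qd$ from the "beads", one factor of $\qd^{-1}$ from each trivalent collapse, etc. The prefactor $\tfrac1r(r')^g$ comes from $\eta$, $\lambda^m$ and $\dep^{-\sigma}$ in \eqref{eq:Zrconnected} after bookkeeping the number of surgery components and the signature of the linking matrix (which for this surgery link is zero, all framings being $0$ and the matrix being a direct sum of hyperbolic blocks, so $\dep^{-\sigma}=1$).

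For the case with marked points $p_i$ colored by $V_{c_i}$, the mapping torus contains, besides the surgery link, $n$ parallel copies of $S^1\times\mathrm{pt}_i$ each colored by $V_{c_i}$ sitting in the $S^1$ direction. Evaluating $F'$ now additionally couples these strands to the cohomology holonomy $\wb\beta$: pushing $V_{c_i}$ once around the $S^1$ loop multiplies by the ribbon/twist-type eigenvalue, which on $V_{c_i}$ in the relevant framing yields exactly $q^{c_i\beta}$, accounting for the $q^{c\beta}$ factor and, after fusing the $V_{c_i}$ into the genus computation, the extra $q^{ck}$ inside the sum and the bump of the exponent from $2g-2$ to $2g-2+n$ (each extra puncture behaves like an extra "leg", i.e. contributes a further $\qd(\beta+k)^{-1}$-type factor). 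The sign $(-1)^{n(r-1)}$ is the product of the $n$ factors of $(-1)^{r-1}$ hidden in each $\qd$. I would assemble all pieces and simplify.

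The main obstacle I anticipate is the bookkeeping: getting the normalization prefactor $\tfrac{(-1)^{n(r-1)}}{r}(r')^g$ exactly right, and in particular pinning down the twist eigenvalue on $V_{c_i}$ so that the holonomy contributes precisely $q^{c_i\beta}$ rather than $q^{c_i\beta}$ times an unwanted power of $q$ coming from framing corrections. This requires careful tracking of the framing anomaly $\dep^{-\sigma+n}$ and of the conventions relating the cohomology holonomy $\wb\beta$ to the module $\sigma^k=\C_{2kr'}$ (so that $t=q^{2r'\beta}$ is the correct substitution). The topological reduction of $S^1\times\Su_g$ to a standard surgery link, and the skein "bubble collapse", are routine given the tools of \cite{BCGP}; the delicate part is purely the scalar arithmetic and sign conventions, which I would verify against the known closed-manifold values (e.g.\ $g=1$, $n=0$ should recover $\Zr$ of the relevant $3$-torus-type manifold already computed in \cite{BCGP}).
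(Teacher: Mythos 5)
Your high-level strategy — treating the mapping torus as the closure of the identity cobordism, invoking the trace identity $\dim_{q^{2r'\beta}}\VV(\Su)=\Zr(S^1_{\wb\beta}\times\ds\Su)$ from \cite{BCGP}, and then reducing the explicit formula to a surgery computation — does match the line of argument the paper points to (which says the proof in \cite[Theorem 5.9]{BCGP} relies on the pivotal structure, i.e.\ precisely the trace argument you sketch). The two computational parts you wave at, however, contain concrete errors. First, your surgery presentation is incoherent as stated: the curves ``$S^1\times a_i$, $S^1\times b_i$'' live in $S^1\times\Su_g$, not in $S^3$, and ``$0$-surgery on the unknot giving $S^1\times S^3$'' is not a thing (it gives $S^1\times S^2$). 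You need an actual surgery link for $\Su_g\times S^1$ in $S^3$ (e.g.\ the standard $(2g+1)$-component $0$-framed link coming from a Kirby diagram for $\Su_g\times D^2$), and you would then have to track the linking matrix honestly rather than assert the signature vanishes.

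Second, and more importantly, your mechanism for the $q^{c\beta}$ factor is wrong. The ribbon/twist eigenvalue $\theta_{V_{c_i}}$ is a fixed scalar depending only on $c_i$; it is independent of $\beta$ and cannot produce the holonomy-dependent factor $q^{c_i\beta}$. The correct source is the Hopf-link pairing between the circle colored $V_{c_i}$ and the surgery component carrying the Kirby color $\Omega_\alpha$ with $\wb\alpha=\wb\beta$: the coefficient is of the form $q^{(\beta+k)c_i}$ (up to a sign), which simultaneously generates the global $q^{c\beta}$ prefactor and the $q^{ck}$ inside the sum — these two factors have the same origin, not two different ones as your sketch implies. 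Relatedly, the sign $(-1)^{n(r-1)}$ also arises in these Hopf-link coefficients and their interplay with the modified dimensions, so deriving it purely as ``$n$ copies of the sign in $\qd$'' will not balance correctly without redoing that bookkeeping. Your proposed sanity check at $g=1$, $n=0$ is a good idea but will not catch these two errors since both concern the $n\geq 1$, $\beta$-dependent part of the formula; you should also test $g=0$, $n=3$ against the multiplicity module computation $\dim\Hom(\sigma^k,V_{c_1}\otimes V_{c_2}\otimes V_{c_3})$ quoted in Section~4.
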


\section{Description of graded TQFT vector spaces for admissible surfaces}
This section contains new results. Our aim is to describe the graded TQFT vector spaces for all admissible surfaces (Definition \ref{admissible_surface}). This includes the case of a surface with vanishing cohomology class and one point colored with projective object of degree $0$, e.g $P_0$ or $V_0$ in case of odd $r$.

\subsection*{Projective objects and multiplicity modules}
For each grading $\alpha\in \C/2\Z$ we define a projective object
$\PP_\alpha$ which is a direct sum of $r'$ indecomposable projective
ones representing orbits under action of degree $\overline{0}\in
\C/2\Z$ invertible objects.

For $r$ odd and $\alpha\in \C/2\Z$ we define $\PP_\alpha$  as follows: 
$$
\PP_{\alpha}:=\left\{\begin{tabular}{cc}
    $P_0\oplus P_2\oplus\dots \oplus P_{r-3} \oplus V_0\oplus\left( \ve\otimes (P_1\oplus P_3\oplus \dots \oplus P_{r-2})\right)$ & {\rm if} $\alpha=\overline{0}\in \C/2\Z$\\
    $P_1\oplus P_3\oplus \dots \oplus P_{r-2}\oplus \left(\ve\otimes (P_0\oplus P_2\oplus\dots \oplus P_{r-3} \oplus V_0)\right)$	& {\rm if} $\alpha=\overline{1}\in \C/2\Z$\\
    $\bigoplus_{\beta\equiv \alpha, \Re(\beta)\in ]-r,r]} V_\beta$ &
    {\rm else.}\end{tabular}\right.$$
	
	Here $\Re(\beta)$ denotes the real part of $\beta$.
	
For $r$ even and $\alpha\in \C/2\Z$ we define $\PP_\alpha$  as follows:
$$\PP_{\alpha}:=\left\{\begin{tabular}{cc}
    $P_0\oplus P_2\oplus\dots \oplus P_{r-2}$ & {\rm if} $\alpha=\overline{0}\in \C/2\Z$\\
    $P_1\oplus P_3\oplus \dots \oplus P_{r-3}\oplus V_0$	& {\rm if} $\alpha=\overline{1}\in \C/2\Z$\\
    $\bigoplus_{\beta\equiv \alpha, \Re(\beta)\in [0,r[} V_\beta$ & {\rm else.}\end{tabular}\right.$$ 

In all cases we set $\AA_\alpha=\EEnd(\PP_\alpha)$, and call it the degree $\alpha$ basic algebra.
 For generic $\alpha$ it is semisimple. For degree $0$ and $1$, it is described in \cite{CGP3}; we quote that it contains  nilpotent elements of order $2$ as well as non diagonal morphisms.
 
 For $\alpha$, $\beta$, $\gamma$ in $\C/2\Z$, we define the
 multiplicity module
 \mbox{$\HH(\alpha,\beta,\gamma)=\HHom(\C,\PP_\alpha\otimes\PP_\beta\otimes\PP_\gamma)$}
 which support a right action of $\AA_\alpha\otimes \AA_\beta\otimes
 \AA_\gamma$.  
 
 Remark that $ \HH(\alpha,\beta,\gamma)$ is isomorphic to
 $\VV\left(S^2,(\PP_\alpha,+),(\PP_\beta,+),(\PP_\gamma,+)\right)$.
 If we represent the pair of pants by a planar trivalent vertex, then
 we need to add a germ of incoming edge corresponding to the incoming
 $\widehat{S}_k$ in order to fix the isomorphism, and use a braiding
 with this colored edge to write the isomorphism corresponding to a
 cyclic permutation.
 
 Given a trivalent vertex $v$ with  ordered oriented edges
 labelled with degrees in $\C/2\Z$, then we have an associated
 multiplicity module $\HH(v)$ defined using $\PP$ color if the
 orientation is outgoing and the dual if the orientation is ingoing.
 For an outgoing edge, we have a right action of the algebra
 corresponding to the grading, and for ingoing edge we have left
 action.  If some edge has fixed color, then the multiplicity module
 is adapted using this color.
 
 \subsection*{TQFT vector space as degree 0 Hochschild homology}
 Now we are able to describe the TQFT vector spaces in the admissible
 case as follows.  Suppose that $G=(\mathcal{V},\mathcal{E})$ is an
 oriented uni-trivalent planar graph and $\Sigma=\partial H_G$ is the
 boundary of a tubular neighborhood of $G$.  Let
 $(\alpha_e)_{e\in\mathcal{E}}\in(\C/2\Z)^{\mathcal{E}}$ be a 1-cycle
 coloring the edges of $G$ and assume that each external edge $e$
 (adjacent to a univalent vertex) is equipped with an object of
 $\cat_{\alpha_e}$.  We identify the univalent vertices of $G$ with
 marked points on $\Sigma$.  Let $\ds \Sigma=(\Su,\{p_i\},\coh,
 {\Lag})$ be the admissible decorated surface such that $\Lag$ is
 defined by $H_G$ and the cohomology class $\coh$, whose value on the
 meridian of an edge $e\in \mathcal{E}$ is $\alpha_e$, vanishes on all
 longitudes
 (curves on the surface which are the preferred parallels of the
 cycles of the graph $G$ using the blackboard framing).  
 Suppose that $\ds \Sigma$ 
 is admissible (so either one of the colors $\alpha_e$ is non integral
 or one of the colors of the $1$-valent vertices of $G$ is
 projective). 
 If $\mathbf{A}$ is an algebra and $\mathbf{M}$ is an
$\mathbf{A}$-bimodule, then the degree $0$ Hochschild homology module
$\Hoch_0(\mathbf{A},\mathbf{M})$ is defined as the quotient of the
module $\mathbf{M}$ by the relations $uv-vu$ for all 
$(u,v)\in \mathbf{A}\times\mathbf{M}$.

  Then the following holds:
\begin{thm}\label{trace}
  Let $\ds \Sigma$ be an admissible decorated surface and $G$ be a
  graph for $\ds \Sigma$ as above.\\
  a) There exists a surjective graded homomorphism $$\HH_G:=\otimes_{v\in \mathcal{V}} \HH(v)\rightarrow \VV(\ds\Sigma).$$\\
  b) Let $\mathcal{E}_\mathrm{int}$ denote the set of internal edges,
  then
  the tensor product $\HH_G=\otimes_{v\in \mathcal{V}} \HH(v)$
 is a bimodule over $\AA_G=\otimes_{e\in \mathcal{E}_\mathrm{int}}\AA_{\alpha_e}$ and we get an isomorphism between degree zero Hochschild homology and the TQFT vector space:
$$\Hoch_0(\AA_G,\HH_G)\cong \VV(\ds \Sigma).$$
\end{thm}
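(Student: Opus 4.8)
The plan is to build the statement up from the genus-zero ``bricks'' of Section \ref{sec:TQFT} and the pants cobordisms $\Pa_{k,l}^{k+l}$, reducing the general surface $\ds\Sigma=\partial H_G$ to a gluing of spheres with three (or fewer) projective punctures along annuli. Concretely, the graph $G$ gives a pair-of-pants decomposition of $\ds\Sigma$: thickening a neighborhood of each trivalent vertex $v$ yields a sphere with three boundary circles carrying the color $\PP_{\alpha_e}$ for each incident edge $e$, and each internal edge $e\in\mathcal E_{\mathrm{int}}$ is an annulus along which two such spheres are glued. The decorated cobordism realizing this decomposition, read from $\emptyset$ to $\ds\Sigma$, is obtained by composing the elementary cobordisms from $\hS_k$ to $\VV(S^2,(\PP,+),(\PP,+),(\PP,+))$ (whose image is $\HH(v)$, by the identification $\HH(v)\cong\VV(S^2,(\PP_\alpha,+),(\PP_\beta,+),(\PP_\gamma,+))$ quoted in the text) with the gluing tubes. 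Part (a) then says: the universal-construction generators of $\VV(\ds\Sigma)$ coming from these decomposed cobordisms already span, i.e.\ $\bigotimes_v\HH(v)\twoheadrightarrow\VV(\ds\Sigma)$. This surjectivity I would get from the fact that $\PP_\alpha$ contains (up to action of the degree-$\overline 0$ invertibles) every indecomposable projective of grading $\alpha$ as a summand, together with the gluing/cutting properties of the TQFT along separating curves with a projective-colored transversal edge; the point is that any decorated cobordism filling $\ds\Sigma$ can be cut along the curves dual to the edges of $G$ and re-expressed through the bricks, exactly as in \cite[Section 6.3]{BCGP}.

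For part (b), the $\AA_G$-bimodule structure on $\HH_G=\bigotimes_v\HH(v)$ comes from the two actions already recorded before the theorem: an outgoing edge at a vertex gives a right $\AA_{\alpha_e}$-action on $\HH(v)$ and an ingoing edge gives a left action, so each internal edge $e$, having one endpoint where it is outgoing and one where it is ingoing, produces commuting left and right $\AA_{\alpha_e}$-actions on $\HH_G$, hence a bimodule over $\AA_G=\bigotimes_{e\in\mathcal E_{\mathrm{int}}}\AA_{\alpha_e}$. The geometric meaning of the $\AA_{\alpha_e}$-action is insertion, on the annulus neighborhood of $e$, of the cobordism given by a cylinder carrying the endomorphism $\varphi\in\EEnd(\PP_{\alpha_e})$ on a $\PP_{\alpha_e}$-colored core strand. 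Gluing the two spheres across the annulus amounts to composing the functor along this circle, which by the general gluing philosophy of \cite{BHMV}-type constructions identifies $\VV$ of the glued surface with the coequalizer of the two $\AA_{\alpha_e}$-actions on the cut pieces; running this over all internal edges replaces ``coequalizer'' by the simultaneous coequalizer, i.e.\ the quotient of $\HH_G$ by $u\cdot v - v\cdot u$ for $u\in\AA_G$, $v\in\HH_G$, which is precisely $\Hoch_0(\AA_G,\HH_G)$. So the isomorphism $\Hoch_0(\AA_G,\HH_G)\cong\VV(\ds\Sigma)$ is: part (a) gives surjectivity of $\HH_G\to\VV(\ds\Sigma)$, the action computation shows this map descends to $\Hoch_0$, and a dimension count (via the Verlinde formula, Theorem \ref{P:verlinde}, for the $t=1$ specialization, and via Theorem \ref{generic_surface} in the non-integral case) or an explicit inverse coming from the dual pants $\Pa_{k+l}^{k,l}$ forces it to be an isomorphism.

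The step I expect to be the main obstacle is showing that the map $\HH_G\to\VV(\ds\Sigma)$ factors through $\Hoch_0$ \emph{and} is injective on the quotient — equivalently, that the relations $uv-vu$ with $u\in\AA_G$ are \emph{exactly} the relations in $\VV(\ds\Sigma)$, no more and no fewer. That it factors through $\Hoch_0$ is the ``trace'' property: inserting $\varphi$ on the $e$-annulus on one side of the gluing equals inserting it on the other side, which is an isotopy of the $\PP_{\alpha_e}$-colored strand across the gluing circle — geometrically clear, but one must check it is compatible with the framing/signature-defect bookkeeping (the Maslov correction $\mu$ in Definition \ref{D:Compostion}) and with the germ-of-edge/ordering conventions for $r$ even. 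That there are no further relations is the genuinely hard input: in the generic (non-integral) case it follows from Theorem \ref{generic_surface} because the $\HH(v)$ are then direct sums of one-dimensional $r$-admissible pieces and $\AA_{\alpha_e}$ is semisimple, so $\Hoch_0$ just picks out matching colorings and one matches the $r^{3g-3}$-type count; but in the integral case $\AA_0$ and $\AA_1$ are non-semisimple with order-two nilpotents and non-diagonal morphisms, so one cannot count naively and must instead invoke the Verlinde formula as the dimension constraint — which is why the theorem is stated, in the integral case, only under the standing assumption that some puncture carries a projective color (so that Theorem \ref{P:verlinde} applies). I would therefore organize the proof so that injectivity is deduced from $\dim\VV(\ds\Sigma)=\dim\Hoch_0(\AA_G,\HH_G)$, with the right-hand side computed by a local Euler-characteristic-type argument over the vertices and edges of $G$ and the left-hand side supplied by Theorems \ref{generic_surface} and \ref{P:verlinde}.
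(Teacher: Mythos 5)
Your overall strategy — cut $\ds\Sigma$ along the curves dual to the edges of $G$, reduce to spheres with three $\PP$-colored punctures, observe that the gluing map factors through $\Hoch_0(\AA_G,\HH_G)$, then show injectivity — matches the paper's. But the key step, injectivity, is handled by a fundamentally different (and incomplete) mechanism. You propose to deduce injectivity from a dimension identity $\dim\VV(\ds\Sigma)=\dim\Hoch_0(\AA_G,\HH_G)$, with the left side given by Theorem \ref{P:verlinde} (or Theorem \ref{generic_surface}) and the right side by an unspecified ``local Euler-characteristic-type argument.'' This is where the gap is. The Verlinde formula only constrains $\dim\VV$; nothing in the paper (or in your argument) computes $\dim\Hoch_0(\AA_G,\HH_G)$ independently. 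In the integral case the algebras $\AA_{\overline 0}$, $\AA_{\overline 1}$ are non-semisimple (nilpotents of order two, non-diagonal morphisms), so the Hochschild quotient of $\HH_G$ over $\AA_G$ cannot be read off from a matching-colorings count, and the local Euler-characteristic argument you gesture at would itself be a substantial piece of algebra that you have not supplied. Your own last paragraph essentially concedes this point, and the concession is not cured by the proposal.

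The paper's actual route avoids any dimension count. It works with the algebroid $\Delta(\gamma,\coh_\gamma)$ of admissible decorated surfaces with boundary $(\gamma,\coh_\gamma)$, in the style of \cite[Appendix A]{BHMV}, and proves a Splitting Theorem (Theorem \ref{teo:splitting}) by \emph{constructing an explicit inverse}: the class $[\ds M]$ is sent to $\Id'_{\ds\Sigma_1}\otimes[\ds M]$ where $\Id'_{\ds\Sigma_1}$ is the cylinder over $\ds\Sigma_1$ pinched along $\gamma\times[0,1]$ — admissibility of $\ds\Sigma_1$ is exactly what makes this cobordism legal. The Trace Corollary (Corollary \ref{cor:trace}) then gives $\VV(\ds\Sigma)$ as $\Hoch_0$ of a bimodule over the whole algebroid, and a Morita equivalence between $\Delta(\gamma,\coh_\gamma)$ and the basic algebra $\AA_\alpha$ (with $\PP_\alpha$ as generating projective object) gives the colored splitting and colored trace corollaries. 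Theorem \ref{trace} then follows by iterating over the edges of $G$ starting from spheres with two or three punctures. This is a structural proof that produces the isomorphism directly, is uniform across the generic and integral cases, and does not rely on the Verlinde formula at all. If you want to pursue the dimension-count route instead, you would need to actually compute $\dim\Hoch_0(\AA_G,\HH_G)$ for the non-semisimple $\AA_{\overline 0}$, $\AA_{\overline 1}$ and verify it against Theorem \ref{P:verlinde}; as written, that step is the missing core of your argument.
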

\begin{rem}
Any admissible decorated surface can be represented this way. The vanishing hypothesis on longitudes can be removed if we use twisted action of the basic algebras.
\end{rem}

This theorem is the new part of this paper.
We expect that a similar statement is true
for the non-admissible case, but we leave this for further
investigation.

\subsection*{Proof of Theorem \ref{trace}}\label{proof}

In order to prove the theorem, we extend our TQFT to decorated
surfaces with boundary, and follow a method from \cite{BHMV}. This is
part of an extended TQFT functor which will be fully developped in
\cite{dR}.  Let us first recall some definitions from Appendix A of
\cite{BHMV}:
\begin{defn}
\begin{itemize}
\item An algebroid is a $\C$-linear category $\Delta$; if $a,b\in
  Ob(\Delta)$ one denotes $_a\Delta_b=\Hom_{\Delta}(a,b)$ and the
  composition of $y\in {_b\Delta_c}$ and $x\in {_a\Delta_b}$ is also
  denoted $y\circ x=xy$.
\item A left $\Delta$-module is a functor $F:\Delta\to {\rm \bf
    Vect}$; a right module is a functor $\Delta^{op}\to {\rm \bf
    Vect}$. If $M$ is a left (resp. right) $\Delta$-module and $a\in
  Ob(\Delta)$, we denote $M(a)\in {\rm \bf Vect}$ also by $_aM$
  (resp. $M_a$).
\item If $\Delta'$ is another algebroid, a $(\Delta\times
  \Delta')$-bimodule is a functor $F:\Delta\times (\Delta')^{op}\to
  {\rm \bf Vect}$.
\item If $\Delta$ is an algebroid, $M$ a right $\Delta$-module and $N$
  a left $\Delta$-module, then their \emph{tensor product}\
  $M\otimes_\Delta N$ is the quotient of the vector
  space $$\bigoplus_a M_a \otimes_{\C} {}_aN$$ (where $a$ ranges over
  all the objects of $\Delta$ or if $\Delta$ is not small in a small
  skeleton of $\Delta$) by the subvector space generated by the
  relations $u\alpha\otimes v-u\otimes \alpha v$ where $u\in M_a, v\in
  _bN, \alpha\in _a\Delta_b$.
\item If $\Delta$ is an algebroid and $M$ is a $\Delta$-bimodule, the Hochschild homology module $\Hoch_0(\Delta,M)$ is defined as the quotient of the module $\bigoplus_a {}_aM_a$ by the relations $uv-vu$ for all $u\in {}_a\Delta_b,v\in {}_b\Delta_a$. 
\end{itemize}
\end{defn}
\begin{exm}
If $\Delta$ is a $k$-linear category, then $\Delta$ is a left and right $\Delta$-module. The left module structure is given by the functor $a\to \Hom(\cdot ,a)$; the right module by $a\to \Hom(a,\cdot)$.
\end{exm}

Let $\gamma$ be an oriented closed (possibly non connected) curve with
one base point $*_i$ per component and $\coh_\gamma\in
H^1(\gamma,\cup_{\pi_0 \gamma} *_i;\C/2\Z)$, then we define the
algebroid $\Delta(\gamma,\coh_\gamma)$ where objects are admissible
decorated surfaces $\ds \Sigma=(\Sigma,p,\coh,{\Lag})$ with boundary
$(\gamma,\coh_\gamma)$. Here we ask that $\cup *_i$ is the set of base
points in $\Sigma$ and the restriction of $\coh$ to $\partial \Sigma$
is $\coh_\gamma$; furthermore the Lagrangian subspace ${\Lag}$ of a
surface with boundary $\Sigma$ is by definition a maximal isotropic
subspace of $H_1(\Sigma;\R)$.  The space of morphisms from $\ds\Sigma$
to $\ds\Sigma'$ is the graded TQFT vector space $\VV(\overline{\ds
  \Sigma}\cup_\gamma{\ds \Sigma'})$.  (Here the Lagrangian subspace of
the glueing of two surfaces $\overline{\ds \Sigma}\cup_\gamma{\ds
  \Sigma'}$ is the image in $H_1(\overline{\ds \Sigma}\cup_\gamma{\ds
  \Sigma'};\R)$ of ${\Lag}_1\oplus {\Lag}_2$ via Mayer-Vietoris map.)
Composition is induced by gluing along the intermediate decorated
surface, using also the pair of pants map $\Pa_{k,l}^{k+l}$ from
$\hS_k\sqcup \hS_l$ to $\hS_{k+l}$.  For a (possibly non admissible)
decorated surface $\ds\Sigma$ with boundary $(\gamma,\coh_\gamma)$,
which we consider as a cobordism from $\emptyset$ to
$(\gamma,\coh_\gamma)$, we define the TQFT module $\VV(\ds\Sigma)_\_$
which is a right module over $\Delta(\gamma,\coh_\gamma)$ by
$\VV(\ds\Sigma)_{\Sigma'}=\VV({\ds \Sigma}\cup_\gamma\overline{\ds
  \Sigma'})$, and similarly if $\ds \Sigma''$ is a decorated cobordism
from $(\gamma,\coh_\gamma)$ to $\emptyset$ we define the TQFT module
${}_\_\VV(\ds\Sigma)$ which is a left module over
$\Delta(\gamma,\coh_\gamma)$ by ${}_{\Sigma'}\VV(\ds\Sigma'')=\VV({\ds
  \Sigma'}\cup_\gamma{{\ds
    \Sigma''}})$.

We have the following general splitting theorem.
\begin{thm}[Splitting theorem]\label{teo:splitting}
  If $\ds \Sigma=\ds \Sigma_1\cup_\gamma \ds\Sigma_2$ is a closed
  decorated surface which splits along the multicurve $\gamma$ as a
  cobordism $\ds \Sigma_1$ from $\emptyset$ to $(\gamma,\coh_\gamma)$
  followed by a cobordism $\ds \Sigma_2$ from $(\gamma,\coh_\gamma)$
  to $\emptyset$, and suppose that $\ds \Sigma_1$ is admissible then
  we have an isomorphism
$$\VV((\ds\Sigma_1))_\_\otimes_{\Delta(\gamma,\coh_\gamma)}\  {}_\_\VV((\ds\Sigma_2))\cong \VV(\ds\Sigma)\ .$$
\end{thm}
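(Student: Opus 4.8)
The plan is to follow the universal-construction strategy of \cite[Appendix A]{BHMV}, adapted to the decorated setting. Recall that $\VV(\ds\Sigma)$ is the quotient of the free vector space on decorated cobordisms from $\hS_k$ to $\ds\Sigma$ by the kernel of the pairing with cobordisms from $\ds\Sigma$ to $\hS_k$. The key point is that every such cobordism from $\emptyset$ to $\ds\Sigma$ (or in the other direction) can be cut along the multicurve $\gamma$, since $\gamma\subset\Sigma$ is embedded in the boundary. First I would define the two natural maps. Given a generator of $\VV(\ds\Sigma_1)_\_$, i.e.\ a cobordism $x$ from $\emptyset$ to $(\gamma,\coh_\gamma)$ together with a decorated surface $\ds\Sigma'$ filling $\gamma$ making $x\cup_\gamma\wb{\ds\Sigma'}$ into a generator of $\VV(\ds\Sigma_1\cup_\gamma\wb{\ds\Sigma'})$, paired against a generator ${}_{\ds\Sigma'}\VV(\ds\Sigma_2)$ of the same object, one glues the two pieces along the common filling $\ds\Sigma'$ — including the pair-of-pants maps $\Pa_{k,l}^{k+l}$ to merge the bookkeeping spheres $\hS_k$ — to obtain a cobordism from $\hS_{k+l}$ to $\ds\Sigma$, i.e.\ a generator of $\VV(\ds\Sigma)$. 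This descends to a well-defined bilinear map $\VV(\ds\Sigma_1)_{\ds\Sigma'}\times{}_{\ds\Sigma'}\VV(\ds\Sigma_2)\to\VV(\ds\Sigma)$, and the Hom-compatibility of gluing (functoriality of $\VV$ on the algebroid $\Delta(\gamma,\coh_\gamma)$) shows the collection of these maps factors through the relations $u\alpha\otimes v-u\otimes\alpha v$, giving a map $\Phi\colon\VV(\ds\Sigma_1)_\_\otimes_{\Delta}{}_\_\VV(\ds\Sigma_2)\to\VV(\ds\Sigma)$.

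Next I would construct the inverse. Any cobordism $\ds M$ from $\hS_k$ to $\ds\Sigma$ can be cut along the product neighborhood $\gamma\times[0,1]$ of $\gamma$ in the collar of $\partial M$: concretely, choose a decorated surface $\ds\Sigma'$ with boundary $(\gamma,\coh_\gamma)$ which is the image of this separating surface, so that $\ds M$ is isotopic to a composition passing through $\ds\Sigma'$ — this expresses $\ds M$ as the image under $\Phi$ of an element of $\VV(\ds\Sigma_1)_{\ds\Sigma'}\otimes{}_{\ds\Sigma'}\VV(\ds\Sigma_2)$ (one must also split $\hS_k$ using the inverse pair-of-pants $\Pa_{k+l}^{k,l}$ of \cite[Section 4]{BCGP}). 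This gives surjectivity of $\Phi$ on generators; but to get an actual inverse map $\Psi$ one argues that the assignment is independent of the choice of splitting surface $\ds\Sigma'$ (any two are related by a sequence of "stabilization" moves, i.e.\ gluing invertible cobordisms from the algebroid, which are exactly absorbed by the $\otimes_\Delta$ relation) and respects the defining relations of the universal construction on both sides — here one uses that the pairing that cuts out relations in $\VV(\ds\Sigma)$ can itself be cut along $\gamma$, matching the product of pairings defining the right-module and left-module structures.

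The main obstacle — and the place where the hypothesis that $\ds\Sigma_1$ is admissible is essential — is ensuring that the intermediate filled surfaces $\ds\Sigma'$ can always be chosen to be \emph{admissible}, so that they are genuine objects of the algebroid $\Delta(\gamma,\coh_\gamma)$ and the TQFT vector spaces appearing are the ones built from the $\hS_k$. Since $\ds\Sigma_1$ is admissible, either $\coh_\gamma$ is non-integral on some loop of $\gamma$, or one can route a projective color through the cut, so a filling $\ds\Sigma'$ inherits admissibility; this is what makes $\VV$ on the glued-up surface behave well and makes the pair-of-pants bookkeeping consistent. The remaining verifications — that $\Phi$ and $\Psi$ are mutually inverse, graded, and that $\Psi$ is well defined independent of all choices — are then a matter of checking compatibility of gluing with the relations, parallel to \cite[Appendix A]{BHMV}, together with the observation that the Maslov-index correction in the composition law only affects the $\Z$-grading shift and is tracked by the signature-defect integer, hence does not obstruct the isomorphism of graded vector spaces. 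Theorem~\ref{trace} then follows by applying Theorem~\ref{teo:splitting} to the multicurve $\gamma$ consisting of the meridians of the internal edges of $G$, identifying $\Delta(\gamma,\coh_\gamma)$ with (a category equivalent to one with a single object whose endomorphism algebra is) $\AA_G$, the right module with $\HH_G$, and the left module with the functional that implements the Hochschild trace, so that $\VV(\ds\Sigma_1)_\_\otimes_\Delta{}_\_\VV(\ds\Sigma_2)\cong\Hoch_0(\AA_G,\HH_G)$.
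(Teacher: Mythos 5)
Your forward map $\Phi$ and the overall strategy --- build $\Phi$ by gluing (with pair-of-pants bookkeeping) and invert it by cutting along a collar of $\gamma$ --- match the paper's. The gap lies in your construction of the inverse. You allow an arbitrary choice of splitting surface $\ds\Sigma'$, and then assert that independence of this choice follows because ``any two are related by a sequence of stabilization moves \dots which are exactly absorbed by the $\otimes_\Delta$ relation.'' That claim is not obvious and is not proved; it is essentially the content that needs establishing, and you cannot discharge it by remark. Similarly, your closing sentence that ``the pairing that cuts out relations in $\VV(\ds\Sigma)$ can itself be cut along $\gamma$'' is the heart of the difficulty, not a routine compatibility check.

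The paper sidesteps both issues by making one canonical choice. Since $\ds\Sigma_1$ is admissible, one takes the intermediate object of $\Delta(\gamma,\coh_\gamma)$ to be $\ds\Sigma_1$ itself (pushed into the collar), and uses as the left factor $\Id'_{\ds\Sigma_1}$, the cylinder over $\ds\Sigma_1$ pinched at $\gamma\times[0,1]$. The inverse is then simply $[\ds M]\mapsto \Id'_{\ds\Sigma_1}\otimes[\ds M]$, read through the tautological identification ${}_{\ds\Sigma_1}\VV(\ds\Sigma_2)=\VV(\ds\Sigma_1\cup_\gamma\ds\Sigma_2)=\VV(\ds\Sigma)$, followed by the projection onto the tensor product over $\Delta(\gamma,\coh_\gamma)$. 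Well-definedness is then immediate --- there is no choice of splitting surface to argue about --- and the admissibility hypothesis on $\ds\Sigma_1$ is used exactly where you guessed, but for a sharper reason: it makes the pinched cylinder $\Id'_{\ds\Sigma_1}$, a cobordism from $\emptyset$, a legitimate morphism of $\Cob$. You identified the right ingredients; the fix is to organize the inverse around this single canonical cobordism rather than chasing independence of choices.
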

\begin{proof}
  The map from left to right is induced by gluing and the pair of
  pants $\Pa_{k+l}^{k,l}$ (recall indeed that in Section \ref{sec:TQFT} we defined $\VV(\Su)=\bigoplus_k \VV_k(\Su)$ where $\VV_k(\Su)$ is generated by of the morphisms from $\widehat{S}_k$ to $\Su$
  .  We have to build the inverse map.  Since
  $\ds \Sigma_1$ is admissible then the inverse map is given by the
  composition
$$
\begin{array}{ccccc}
 \VV(\ds\Sigma)&\to&
 \VV(\ds\Sigma_1)_{\ds \Sigma_1}\otimes_\C\,{}_{\ds \Sigma_1}\VV(\ds\Sigma_2)&\to&
 \VV(\ds\Sigma_1)_\_\otimes_{\Delta(\gamma,\coh_\gamma)}\  {}_\_\VV(\ds\Sigma_2){}
 \\
 \left[\ds M\right]&\mapsto& 
 \Id'_{\ds \Sigma_1}\otimes [\ds M]&\mapsto&  
 \Id'_{\ds \Sigma_1}\otimes [\ds M]
\end{array}
$$
where $\Id'_{\ds \Sigma_1}$ denote the class of the cylinder of ${\ds
  \Sigma_1}$ pinched at $\gamma\times[0,1]$. The first map consists in viewing $M$ as the glueing of a collar over ${\ds \Sigma_1}$ in $M$ and its complement which is diffeomorphic to the whole $M$; the second map is the quotient defining the tensor product over $\Delta(\gamma,\coh_\gamma)$. 
  
This map is well defined as by definition it holds $\,{}_{\ds
  \Sigma_1}\VV(\ds\Sigma_2)=\VV(\ds\Sigma_1\cup_{\gamma} {\ds
  \Sigma_2})=\VV(\ds \Sigma)$.
\end{proof}
As a corollary we get the TQFT vector space of a decorated surface
$\ds \Sigma$ which is obtained by closing an admissible decorated
cobordism $\ds \Sigma_\gamma$ from $(\gamma,\coh_\gamma)$ to itself by identification of the two copies of $\gamma$.
Here, we have a base point on each component of the boundary curve. We
then define the TQFT bimodule $\VV({}_\_(\ds \Sigma_\gamma)_\_)$ over
$\Delta(\gamma,\coh_\gamma)$.
\begin{cor}[Trace corollary]\label{cor:trace}
  Let $\ds \Sigma$ be a connected closed surface obtained by closing
  an admissible cobordism (identification of the two connected copies
  of $\gamma$), then we have an isomorphism:
  $$\Hoch_0(\Delta(\gamma, \coh_\gamma),\VV({}_\_(\ds \Sigma_\gamma)_\_)\cong \VV(\ds \Sigma)\ .$$
\end{cor}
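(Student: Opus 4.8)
The plan is to deduce Corollary \ref{cor:trace} from the Splitting theorem \ref{teo:splitting} by the standard ``trace = coequalizer of the two module structures'' argument, exactly paralleling the passage from a tensor product $M\otimes_\Delta N$ to Hochschild homology $\Hoch_0(\Delta,M)$. First I would set up the geometry: writing $\ds\Sigma$ as the closure of the admissible cobordism $\ds\Sigma_\gamma$ from $(\gamma,\coh_\gamma)$ to itself, I would choose an intermediate copy of $(\gamma,\coh_\gamma)$ inside $\ds\Sigma$ and cut there, so that $\ds\Sigma$ is presented as $\ds\Sigma_1\cup_\gamma\ds\Sigma_2$ where $\ds\Sigma_1$ is a collar $\gamma\times[0,1]$ (hence admissible, since $\gamma$ carries the admissible decoration and, in the integral case, we invoke the standing projectivity hypothesis on a marked point, which lies on $\ds\Sigma_2$ together with the rest of $\ds\Sigma_\gamma$) and $\ds\Sigma_2\cong\ds\Sigma_\gamma$ as a cobordism from $(\gamma,\coh_\gamma)$ to $(\gamma,\coh_\gamma)$ but viewed with both boundary copies pushed to one side. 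Applying Theorem \ref{teo:splitting} to this splitting gives
$$\VV(\ds\Sigma)\cong \VV(\ds\Sigma_1)_\_\otimes_{\Delta(\gamma,\coh_\gamma)}{}_\_\VV(\ds\Sigma_2).$$

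Next I would identify the two factors. The collar $\ds\Sigma_1=\gamma\times[0,1]$, regarded as a cobordism from $\emptyset$ to $(\gamma,\coh_\gamma)\sqcup(\overline{\gamma},\coh_\gamma)$, is precisely (a pair-of-pants stabilized version of) the unit object that represents the diagonal bimodule: its associated right $\Delta(\gamma,\coh_\gamma)^{\mathrm{op}}\otimes\Delta(\gamma,\coh_\gamma)$-module is the Hom-bimodule ${}_a\Delta_b\mapsto\Hom_{\Delta(\gamma,\coh_\gamma)}(a,b)$, by the very definition of morphisms in the algebroid as TQFT vector spaces of glued surfaces. On the other side, ${}_\_\VV(\ds\Sigma_2)$ is by construction the TQFT bimodule $\VV({}_\_(\ds\Sigma_\gamma)_\_)$, since cutting $\ds\Sigma_\gamma$ open along the two copies of $\gamma$ and recording left/right actions is exactly how that bimodule was defined. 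Thus the right-hand side becomes $\Delta\otimes_\Delta\VV({}_\_(\ds\Sigma_\gamma)_\_)$ where the outer tensor is over one copy of $\Delta$ and the residual $\Delta$-bimodule structure is the remaining one; quotienting $\Delta\otimes_\Delta M$ by the tensor relations identifies it with $\bigoplus_a{}_aM_a$ modulo the relations $u\alpha\otimes v\sim u\otimes\alpha v$, which is precisely the presentation of $\Hoch_0(\Delta(\gamma,\coh_\gamma),\VV({}_\_(\ds\Sigma_\gamma)_\_))$ given in the definitions recalled above. Tracking the grading through the pair-of-pants maps $\Pa_{k,l}^{k+l}$ shows the isomorphism is graded.

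The main obstacle I expect is bookkeeping rather than conceptual: one must check carefully that the collar $\ds\Sigma_1$ really represents the diagonal bimodule and not some twist of it — in particular that the base points, the framing/blackboard-framing data on longitudes, and the $\hS_k$-stabilizations match on the two sides, and that the Lagrangian subspaces glue correctly under Mayer--Vietoris so that no Maslov-index anomaly (the $\mu$-term in Definition \ref{D:Compostion}) obstructs the identification with an honest (ungraded-shift) trace. A secondary subtlety is the admissibility hypothesis: closing up $\ds\Sigma_\gamma$ could a priori create a component violating Definition \ref{admissible_surface}, so one should note that $\ds\Sigma$ being admissible is exactly the hypothesis of the corollary, and that the collar $\ds\Sigma_1$ inherits admissibility from $(\gamma,\coh_\gamma)$, which is guaranteed because $\ds\Sigma$ is admissible and connected. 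Once these matching checks are in place, the corollary follows formally, and combined with a choice of graph $G$ for $\ds\Sigma$ this yields Theorem \ref{trace}.
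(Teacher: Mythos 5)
Your overall strategy — cut $\ds\Sigma$ along a parallel push-off of $\gamma$ into an annular collar and a copy of $\ds\Sigma_\gamma$, apply the Splitting Theorem, then recognize the collar as the diagonal bimodule so that the resulting tensor product is degree-zero Hochschild homology — is the right one, and indeed the only reasonable way to deduce the Trace Corollary from Theorem \ref{teo:splitting}. However there is a genuine error in the justification of admissibility that invalidates the argument as written. You declare $\ds\Sigma_1 := \gamma\times[0,1]$ and assert it is admissible ``since $\gamma$ carries the admissible decoration and, in the integral case, we invoke the standing projectivity hypothesis on a marked point, which lies on $\ds\Sigma_2$.'' But the admissibility of $\ds\Sigma_1$ is a condition on $\ds\Sigma_1$ alone: when $\coh_\gamma(\gamma)\in\Z/2\Z$ the annulus carries no loop with non-integral holonomy and no projective marked point, so it is \emph{not} admissible, and a projective marked point sitting on $\ds\Sigma_2$ does nothing for $\ds\Sigma_1$. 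The Splitting Theorem as stated cannot then be applied with the collar on the admissible side. The correct move is simply to swap roles: the hypothesis of the corollary is precisely that $\ds\Sigma_\gamma$ is admissible, so one takes $\ds\Sigma_1=\ds\Sigma_\gamma$ (viewed as a cobordism from $\emptyset$ to the two parallel copies of $\gamma$) and $\ds\Sigma_2$ to be the annulus. With this correction the hypothesis of Theorem \ref{teo:splitting} holds with no extra argument, in both the integral and non-integral cases.

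A secondary point that should be tightened: the gluing multicurve is the pair of parallel copies $\gamma\sqcup\gamma'$, so Theorem \ref{teo:splitting} produces a tensor product over the algebroid $\Delta(\gamma\sqcup\gamma',\coh)$, not over $\Delta(\gamma,\coh_\gamma)$ as you write. Passing from this to $\Hoch_0(\Delta(\gamma,\coh_\gamma),\cdot)$ requires identifying the annulus's module ${}_\_\VV(A)$ as the diagonal bimodule restricted along $\Delta(\gamma)\times\Delta(\gamma)^{\mathrm{op}}\hookrightarrow\Delta(\gamma\sqcup\gamma')$ and then recognizing the resulting coequalizer. Your ``$\Delta\otimes_\Delta M$ where the outer tensor is over one copy of $\Delta$'' gestures at $\Delta\otimes_{\Delta\otimes\Delta^{\mathrm{op}}}M\cong\Hoch_0(\Delta,M)$, but as written $\Delta\otimes_\Delta M$ is just $M$; the tensor must be taken over the enveloping algebroid in one step (or, equivalently, presented directly as the quotient of $\bigoplus_a{}_aM_a$ by $uv-vu$, matching the definition recalled in the paper). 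With the admissible side corrected and the tensor relations over the multicurve algebroid spelled out, the argument closes up and agrees with what the paper intends.
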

The above theorem and corollary cannot be used directly for the
computation. Following again the method in \cite{BHMV} we will have a
colored version of the splitting theorem and trace corollary thanks to
a Morita reduction of the algebroid of a curve.  Using the theorem in
\cite[Appendix A]{BHMV} we establish the following:
\begin{thm}
  The algebroid of a curve $\gamma=\sqcup (\gamma_i,\omega_\gamma)$
  is Morita equivalent to a tensor product of basic algebras
  $\otimes_i\AA_{\alpha_i}$, where
  $\alpha_i=\omega_\gamma(\gamma_i)$. Here, we view this algebra as an
  algebroid with $\otimes_i\PP_{\alpha_i}$ as unique object.
\end{thm}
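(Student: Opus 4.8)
The strategy is to apply the Morita reduction for algebroids of \cite[Appendix~A]{BHMV}: an algebroid $\Delta$ is Morita equivalent to $\EEnd_\Delta(X)$, viewed as a one-object algebroid, as soon as $X$ \emph{generates} $\Delta$, meaning that every morphism space $\Hom_\Delta(a,b)$ is spanned by composites $a\to X\to b$. So there are three tasks: exhibit the object $X$, compute $\EEnd_\Delta(X)$, and prove the generation property. I carry them out in this order, and I expect the last one to be the real work.

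\emph{The generating object and its endomorphism algebra.} For each component $\gamma_i$ of $\gamma$ let $D_i$ be the disk bounded by $\gamma_i$, decorated with one outgoing marked point colored by $\PP_{\alpha_i}$, with trivial Lagrangian, and with the (unique) cohomology class extending $\coh_\gamma|_{\gamma_i}$ --- the extension exists and is unique because $H_1(D_i,\partial D_i;\C/2\Z)=0$ and $\coh_\gamma(\gamma_i)=\alpha_i$ is precisely the meridian value of the marked point. Since $\PP_{\alpha_i}$ is a sum of indecomposable projectives, $D_i$ is admissible for every degree $\alpha_i$, integral or not, so $X:=\sqcup_i D_i$ is an object of $\Delta(\gamma,\coh_\gamma)$; it is the object matched with $\otimes_i\PP_{\alpha_i}$ on the basic-algebra side. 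Now $\EEnd_\Delta(X)=\VV(\overline X\cup_\gamma X)$, and $\overline X\cup_\gamma X$ is the disjoint union over $i$ of $2$-spheres, each carrying two points colored $\PP_{\alpha_i}$ with signs $(-,+)$. By monoidality of $\VV$ and the genus-zero identification $\VV(S^2,(V,-),(W,+))=\HHom(V,W)$ this is $\otimes_i\HHom(\PP_{\alpha_i},\PP_{\alpha_i})$; and since the $\sigma$-orbits occurring in $\PP_{\alpha_i}$ are infinite and pairwise disjoint we have $\Hom(\PP_{\alpha_i}\otimes\sigma^k,\PP_{\alpha_i})=0$ for $k\neq 0$, so this is $\otimes_i\EEnd(\PP_{\alpha_i})=\otimes_i\AA_{\alpha_i}$. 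That the composition of $\Delta$ --- which is gluing followed by the pants $\Pa_{k,l}^{k+l}$ --- corresponds to the product of $\otimes_i\AA_{\alpha_i}$ under this identification is the compatibility of $\VV$ in genus zero with the monoidal and composition structures of $\cat$.

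\emph{Generation.} Given admissible decorated surfaces $\ds\Sigma,\ds\Sigma'$ with boundary $(\gamma,\coh_\gamma)$, I must write every element of $\Hom_\Delta(\ds\Sigma,\ds\Sigma')=\VV(\overline{\ds\Sigma}\cup_\gamma\ds\Sigma')$ as a sum of composites $\ds\Sigma\to X\to\ds\Sigma'$. Pick a handlebody $H$ with $\partial H=\overline{\ds\Sigma}\cup_\gamma\ds\Sigma'$ of the special shape $H=H_-\cup_{\sqcup_i\Delta_i}H_+$, where $\Delta_i$ is a disk bounded by $\gamma_i$, $H_-$ is a handlebody whose boundary is $\overline{\ds\Sigma}$ capped off by the $\Delta_i$, and $H_+$ likewise caps $\ds\Sigma'$; such an $H$ exists because a closed orientable surface bounds a handlebody and gluing handlebodies along boundary disks again produces a handlebody. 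The space $\VV(\partial H)$ is spanned by decorated cobordisms supported on $H$ with colored ribbon graph a spine of $H$ which --- in the admissible case, using the skein calculus of \cite{BCGP} --- may be taken with all edges carrying projective colors and transverse to $\sqcup_i\Delta_i$. Fusing, across each $\Delta_i$, the bundle of transverse arcs into a single edge colored by the tensor product $Z_i$ of those colors (duals for ingoing arcs), which is a projective object of degree $\alpha_i$, and inserting a retraction $Z_i\hookrightarrow\bigoplus_\ell\PP_{\alpha_i}\otimes\sigma^{k_\ell}\twoheadrightarrow Z_i$ (every indecomposable projective of degree $\alpha_i$ is, up to a twist by $\sigma$, a summand of $\PP_{\alpha_i}$), one exhibits each spanning cobordism as a composite in $\Delta$ of a cobordism $\ds\Sigma\to X$ supported on $H_-$ with a cobordism $X\to\ds\Sigma'$ supported on $H_+$, the twists $\sigma^{k_\ell}$ being recorded by the spheres $\widehat S_{k_\ell}$ through the pants maps. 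This is the generation property, so by \cite[Appendix~A]{BHMV} the theorem follows; together with Corollary~\ref{cor:trace} and Morita invariance of $\Hoch_0$ it also yields the Hochschild description of Theorem~\ref{trace}.

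\emph{Main obstacle.} The work is concentrated in the generation step. Three points need care: that, in the admissible case, $\VV$ of a closed surface is spanned by handlebody-spine cobordisms \emph{all} of whose colors --- in particular those on the arcs crossing the $\Delta_i$ --- may be taken projective (which uses \cite{BCGP} and the fact that tensor products and duals of projectives in $\cat$ are projective); the bookkeeping of the invertible twists $\sigma^{k_\ell}$ so that the factorization respects the $\Z$-grading and the pants composition $\Pa^{k+l}_{k,l}$; and the verification that regluing the two capped pieces along $X$ and applying the pants map reproduces the original decorated cobordism class, Lagrangians and signature-defects included --- here the genus-zero pieces contribute vanishing Maslov corrections, so this is bookkeeping rather than new input. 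With these in place, nothing beyond \cite[Appendix~A]{BHMV} is required.
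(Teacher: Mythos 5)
Your overall strategy — exhibit the object $X = \sqcup_i D_i$ of disks marked by $\PP_{\alpha_i}$, compute its endomorphism algebra via the genus-zero formula, and prove that $X$ generates the algebroid in the sense of \cite[Appendix~A]{BHMV} — is exactly the route the paper takes; the paper's ``proof'' is just a parenthetical appeal to Morita's theorem, so your writeup genuinely supplies the details.

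However, there is a real error in the endomorphism computation. You claim that because the $\sigma$-orbits of the indecomposable summands of $\PP_\alpha$ are disjoint, one has $\Hom(\PP_\alpha\otimes\sigma^k,\PP_\alpha)=0$ for $k\neq 0$, and hence $\HHom(\PP_\alpha,\PP_\alpha)$ reduces to the plain $\Hom(\PP_\alpha,\PP_\alpha)$. Disjointness of isomorphism classes of projective summands does \emph{not} imply vanishing of $\Hom$ between them: for indecomposable projectives $P,Q$ one has $\dim\Hom(P,Q)=[Q:\mathrm{top}\,P]$, which is controlled by composition factors, not by the summand structure. Concretely, for $0\leq j<r-1$ the projective $P_j$ has composition series $S_j,\ S_{r-2-j}\otimes\ve,\ S_{r-2-j}\otimes\ve^{-1},\ S_j$. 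For $r$ even ($\sigma=\ve$) this gives $\Hom(P_j\otimes\sigma,P_{r-2-j})\neq0$ and both summands lie in $\PP_{\bar 0}$; for $r$ odd ($\sigma=\ve^2$) one finds $\Hom(P_j\otimes\sigma,\ve\otimes P_{r-2-j})\neq 0$, again with both summands in $\PP_{\bar 0}$. So $\HHom(\PP_\alpha,\PP_\alpha)$ has nonzero pieces in degree $k\neq 0$ whenever $\alpha$ is integral, and your reduction step is false precisely in the interesting (non-semisimple) blocks. The theorem survives because $\AA_\alpha$ is defined in the paper as the \emph{graded} endomorphism algebra $\EEnd(\PP_\alpha)=\HHom(\PP_\alpha,\PP_\alpha)$ — this is consistent with the paper's remark that for degrees $\bar 0,\bar 1$ it ``contains nilpotent elements of order $2$ as well as non diagonal morphisms'', which do not exist in the $k=0$ piece — so the correct statement is simply $\EEnd_\Delta(X)=\otimes_i\HHom(\PP_{\alpha_i},\PP_{\alpha_i})=\otimes_i\AA_{\alpha_i}$, with no reduction to degree zero. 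You should drop the false intermediate step; as written it misreads what $\AA_\alpha$ is and it fails outright for $\alpha\in\{\bar0,\bar1\}$.

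The remainder of your argument (the construction of the handlebody $H=H_-\cup_{\sqcup\Delta_i}H_+$, the spanning by spine cobordisms with projective colors, and the factoring through $X$ via retractions onto $\PP_{\alpha_i}\otimes\sigma^{k_\ell}$) follows the same generation pattern the paper relies on and is correct in approach, though — as you acknowledge in your ``main obstacle'' paragraph — it leans on the spanning statement from the skein calculus of \cite{BCGP} in the admissible-but-not-generic case, which is not fully spelled out there.
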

(The above theorem is basically a special case of  
Morita's theorem showing that the category ${\rm Mod-A}$ of representations 
of a finite dimensional algebra $A$ is Morita equivalent 
to that of representations of $End(P)$ if $P$ is a projective module over $A$
generating ${\rm Mod-A}$.)
We deduce
 the colored splitting and colored trace corollaries. We
state them for connected curve which is enough for proving Theorem
\ref{trace}; they also hold for multicurves.

\begin{cor}[Colored splitting]\label{colored:splitting}
  If $\ds \Sigma=\ds \Sigma_1\cup_\gamma \ds\Sigma_2$ is a closed
  decorated surface which splits along the connected curve $\gamma$ as
  a cobordism $\ds \Sigma_1$ from $\emptyset$ to
  $(\gamma,\coh_\gamma)$ followed by a cobordism $\ds \Sigma_2$ from
  $(\gamma,\coh_\gamma)$ to $\emptyset$, and suppose that $\ds
  \Sigma_1$ is admissible then we have an isomorphism
  $$\VV(\ds\Sigma_1\cup (\disc,(\PP_\alpha,+)))\ \otimes_{\AA_\alpha}\  \VV(((\disc,(\PP_\alpha,-))\cup\ds\Sigma_2 )\cong \VV(\ds\Sigma)\ .$$
\end{cor}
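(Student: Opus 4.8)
The plan is to deduce the Colored Splitting Corollary \ref{colored:splitting} from the (uncolored) Splitting Theorem \ref{teo:splitting} by transporting the statement across the Morita equivalence between the algebroid $\Delta(\gamma,\coh_\gamma)$ and the basic algebra $\AA_\alpha$, where $\alpha=\coh_\gamma(\gamma)$. The key point is that tensor product over an algebroid is invariant under Morita equivalence, in the precise sense recorded in \cite[Appendix A]{BHMV}: if $\Delta$ and $\Delta'$ are Morita equivalent via bimodules $_\_\Phi_\_$ and $_\_\Psi_\_$, and if $M$ is a right $\Delta$-module, $N$ a left $\Delta$-module, then $M\otimes_\Delta N\cong (M\otimes_\Delta\Phi)\otimes_{\Delta'}(\Psi\otimes_\Delta N)$. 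Thus the abstract isomorphism of Theorem \ref{teo:splitting} immediately rewrites as an isomorphism with $\Delta(\gamma,\coh_\gamma)$ replaced by $\AA_\alpha$, provided we identify the resulting right $\AA_\alpha$-module and left $\AA_\alpha$-module with the two factors appearing in the statement.

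First I would make explicit the Morita bimodules. By the algebroid Morita theorem quoted just before the corollary, the equivalence $\Delta(\gamma,\coh_\gamma)\simeq \AA_\alpha$ is implemented by the object $(\disc,(\PP_\alpha,+))$ of $\Delta(\gamma,\coh_\gamma)$ — a disc with a single marked point colored by the projective generator $\PP_\alpha$ — which is a generator of the relevant module category and whose endomorphism algebra in $\VV$ is exactly $\AA_\alpha=\EEnd(\PP_\alpha)$ (using the genus-zero identification $\VV(S^2,(\PP_\alpha,+),(\PP_\alpha,-))=\HHom(\PP_\alpha,\PP_\alpha)$ recalled in Section \ref{sec:TQFT}, together with the cyclic/duality manipulations there). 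Concretely, $\Phi$ is the $(\Delta(\gamma,\coh_\gamma)\times\AA_\alpha)$-bimodule $\ds\Sigma'\mapsto\VV(\overline{\ds\Sigma'}\cup_\gamma(\disc,(\PP_\alpha,+)))$ and $\Psi$ is its transpose $\ds\Sigma'\mapsto\VV((\disc,(\PP_\alpha,-))\cup_\gamma\ds\Sigma')$; gluing the disc and the pair-of-pants cobordisms gives the unit and counit of the equivalence, and the triangle identities follow from the structure of $\VV$ on genus-zero cobordisms.

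Next I would compute the two tensored factors. Starting from $\VV(\ds\Sigma_1)_\_$, which is the right $\Delta(\gamma,\coh_\gamma)$-module $\ds\Sigma'\mapsto\VV(\ds\Sigma_1\cup_\gamma\overline{\ds\Sigma'})$, tensoring with $\Phi$ over $\Delta(\gamma,\coh_\gamma)$ yields, by definition of the algebroid tensor product and the gluing axiom for $\VV$, the right $\AA_\alpha$-module $\VV(\ds\Sigma_1\cup_\gamma(\disc,(\PP_\alpha,+)))=\VV(\ds\Sigma_1\cup(\disc,(\PP_\alpha,+)))$; symmetrically $\Psi\otimes_{\Delta(\gamma,\coh_\gamma)}{}_\_\VV(\ds\Sigma_2)\cong\VV((\disc,(\PP_\alpha,-))\cup\ds\Sigma_2)$. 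Substituting these into the Morita-transported form of Theorem \ref{teo:splitting} gives exactly
$$\VV(\ds\Sigma_1\cup(\disc,(\PP_\alpha,+)))\otimes_{\AA_\alpha}\VV((\disc,(\PP_\alpha,-))\cup\ds\Sigma_2)\cong\VV(\ds\Sigma),$$
as claimed. One should check the isomorphism is graded: the pair-of-pants maps $\Pa$ carry the $\Z$-grading (they shift the $\hS_k$-labels additively), the disc with $\PP_\alpha$-point has trivial cohomology class and Lagrangian so contributes no degree shift, and the basic algebra $\AA_\alpha$ acts preserving degree, so all maps in sight are degree-preserving.

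I expect the main obstacle to be the bookkeeping in the second step — verifying that the composite of ``glue in a pinched collar, then insert a disc-with-$\PP_\alpha$-point'' really realizes the unit of the Morita equivalence, i.e. that the identity cobordism $\Id'_{\ds\Sigma_1}$ of Theorem \ref{teo:splitting} is carried to the corresponding identity over $\AA_\alpha$ under the change of algebroid. This is where the admissibility hypothesis on $\ds\Sigma_1$ is essential (it is what lets the disc-coevaluation and pair-of-pants cobordisms be admissible, hence valid morphisms in $\Cob$), and it is the place where one must be careful that the Lagrangian/Maslov-index corrections introduced in Definition \ref{D:Compostion} are absorbed correctly into the $\Z$-grading rather than creating an anomalous shift. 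Everything else is a formal consequence of Morita invariance of the algebroid tensor product together with the gluing axiom for $\VV$.
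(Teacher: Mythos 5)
Your proposal is correct and follows essentially the same route the paper takes: the paper derives Corollary~\ref{colored:splitting} from the uncolored Splitting Theorem~\ref{teo:splitting} by applying the Morita equivalence between the algebroid $\Delta(\gamma,\coh_\gamma)$ and the basic algebra $\AA_\alpha$, exactly as you describe. The paper states the deduction in one line and points to \cite[Appendix A]{BHMV}; you have simply unpacked what that deduction amounts to, naming the Morita bimodules $\Phi,\Psi$ via the disc with $\PP_\alpha$-colored point and invoking Morita invariance of the algebroid tensor product.

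One small caution about your bookkeeping: when you write $\VV(\ds\Sigma_1)_\_\otimes_{\Delta(\gamma,\coh_\gamma)}\Phi$ as evaluation of the right module at the Morita generator, remember that $\VV(\ds\Sigma_1)_{\ds\Sigma'}=\VV(\ds\Sigma_1\cup_\gamma\overline{\ds\Sigma'})$ involves the \emph{opposite} of $\ds\Sigma'$, and the opposite of $(\disc,(\PP_\alpha,+))$ carries a $(\PP_\alpha,-)$-colored point. Tracking that sign through (and similarly for the left module and $\Psi$) is precisely what produces the asymmetric pattern $(\PP_\alpha,+)$ on the $\ds\Sigma_1$ side and $(\PP_\alpha,-)$ on the $\ds\Sigma_2$ side in the statement; as written your identification of the two factors glosses over this orientation step. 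This is a bookkeeping matter rather than a gap, but it deserves an explicit check if you were to write this out in full.
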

\begin{cor}[Colored trace]\label{colored:trace}
  Let $\ds \Sigma$ be a connected closed surface obtained by closing
  along a connected curve $(\gamma,\coh_\gamma)$ an admissible
  cobordism $\ds \Sigma_\gamma$ from $(\gamma,\coh_\gamma)$ to itself,
  then we have an isomorphism:
$$\Hoch_0(\AA_\alpha,\VV((\disc,(\PP_\alpha,-)) \cup\ds \Sigma_\gamma\cup(\disc,(\PP_\alpha,+)) )\cong \VV(\ds \Sigma)\ .$$
\end{cor}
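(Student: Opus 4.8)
The plan is to deduce Corollary~\ref{colored:trace} from the Trace corollary~\ref{cor:trace} together with the Morita equivalence between the algebroid $\Delta(\gamma,\coh_\gamma)$ of the curve and the basic algebra $\AA_\alpha$, exactly in the way the Colored splitting corollary~\ref{colored:splitting} refines the Splitting theorem~\ref{teo:splitting}. The underlying formal input is that degree-zero Hochschild homology is a Morita invariant, in the algebroid version of \cite[Appendix A]{BHMV}.

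First I would make precise the transport of bimodules and of $\Hoch_0$ under a Morita equivalence of algebroids. If $\Delta$ and $\Delta'$ are Morita equivalent, realised by a $(\Delta,\Delta')$-bimodule $P$ and a $(\Delta',\Delta)$-bimodule $Q$ with $P\otimes_{\Delta'}Q\cong\Delta$ and $Q\otimes_\Delta P\cong\Delta'$ as bimodules, then for a $\Delta$-bimodule $M$ one sets $M'=Q\otimes_\Delta M\otimes_\Delta P$, and the cyclicity built into the tensor product over an algebroid (the relations $u\alpha\otimes v-u\otimes\alpha v$) yields a natural isomorphism $\Hoch_0(\Delta,M)\cong\Hoch_0(\Delta',M')$. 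This is purely formal given the definitions recalled in the excerpt, and I would specialise it to $\Delta'=\AA_\alpha$ viewed as the one-object algebroid with object $\PP_\alpha$.

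Second I would identify the data geometrically. By the Morita equivalence theorem stated above, for connected $\gamma$ with $\coh_\gamma(\gamma)=\alpha$ the algebroid $\Delta(\gamma,\coh_\gamma)$ is Morita equivalent to $\AA_\alpha$, and (after Morita reduction) the realising bimodules $P$ and $Q$ are represented by the decorated discs $(\disc,(\PP_\alpha,+))$ and $(\disc,(\PP_\alpha,-))$ with boundary $(\gamma,\coh_\gamma)$: gluing such a disc along $\gamma$ is precisely the operation which, on modules over $\Delta(\gamma,\coh_\gamma)$, evaluates at the object $\PP_\alpha$, the pair-of-pants cobordisms $\Pa$ being already absorbed into the composition of $\Delta(\gamma,\coh_\gamma)$ and into the module structures. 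Hence the transported bimodule $Q\otimes M\otimes P$ attached to $M=\VV({}_\_(\ds\Sigma_\gamma)_\_)$ is exactly the $\AA_\alpha$-bimodule $\VV\bigl((\disc,(\PP_\alpha,-))\cup\ds\Sigma_\gamma\cup(\disc,(\PP_\alpha,+))\bigr)$. Combining this with the first step and with the Trace corollary~\ref{cor:trace}, which gives $\Hoch_0(\Delta(\gamma,\coh_\gamma),\VV({}_\_(\ds\Sigma_\gamma)_\_))\cong\VV(\ds\Sigma)$, produces the claimed isomorphism.

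The main obstacle I anticipate is not conceptual but the careful tracking of the extra structure carried by $\VV$: one must check that the Morita transport and the disc-gluing identification are compatible with the $\Z$-grading $\VV=\bigoplus_k\VV_k$ (so that the colored spheres $\hS_k$ and the pants $\Pa_{k,l}^{k+l}$ are threaded correctly through the equivalence) and, when $r$ is even, with the supersymmetry sign conventions; and one must verify that $\Hom$ out of $(\disc,(\PP_\alpha,-))$ inside $\Delta(\gamma,\coh_\gamma)$ really reproduces the TQFT module $\VV(-)$ with its $\AA_\alpha=\EEnd(\PP_\alpha)$-action. The latter is the point where the admissibility hypothesis on $\ds\Sigma_\gamma$ enters: it guarantees that all the auxiliary gluing cobordisms are admissible, hence genuine morphisms of $\Cob$, so that the whole construction stays inside the TQFT. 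Once these compatibilities are established the argument reduces to a short diagram chase chaining the Morita invariance of $\Hoch_0$ with Corollary~\ref{cor:trace}.
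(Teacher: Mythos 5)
Your proposal is correct and follows essentially the same route as the paper: Morita reduction of the algebroid $\Delta(\gamma,\coh_\gamma)$ to the one-object algebroid $\AA_\alpha$, identification of the realising bimodules with capping discs coloured by $\PP_\alpha$, and Morita invariance of $\Hoch_0$ applied to Corollary~\ref{cor:trace}. This is exactly the deduction the paper indicates (via \cite[Appendix A]{BHMV}) immediately after the Morita equivalence theorem.
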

In both statements $\alpha=\omega_\gamma(\gamma)$. Note that in the
last statement we need base point on each boundary component, and have
to be careful with the relative homology class when computing left and
right action.

Theorem \ref{trace} is now proved from sphere with 2 or 3 points by
using the above results.

\end{document}